\documentclass[12pt,a4paper]{amsart}

\usepackage{amsmath,amssymb,amsthm,mathrsfs}
\usepackage[utf8]{inputenc}
\usepackage[T1]{fontenc}
\usepackage{hyperref}
\usepackage{cleveref}
\usepackage{enumitem}
\usepackage{appendix}

\theoremstyle{plain}
\newtheorem{theorem}{Theorem}[section]
\newtheorem{proposition}[theorem]{Proposition}
\newtheorem{lemma}[theorem]{Lemma}
\newtheorem{corollary}[theorem]{Corollary}

\theoremstyle{definition}
\newtheorem{definition}[theorem]{Definition}
\newtheorem{example}[theorem]{Example}

\theoremstyle{remark}
\newtheorem{remark}[theorem]{Remark}

\newcommand{\kk}{k}
\newcommand{\eps}{\epsilon}
\newcommand{\ord}{\operatorname{ord}}
\newcommand{\Spec}{\operatorname{Spec}}
\newcommand{\Hilb}{\operatorname{Hilb}}
\newcommand{\CHilb}{\operatorname{CHilb}}

\newcommand{\N}{\mathbb{N}}
\newcommand{\C}{\mathbb{C}}

\title[Explicit deformation of a spider algebra]{Explicit deformation of a spider algebra\\
to a curvilinear scheme via M\"obius generators}

\author{David Turturean}
\address{Massachusetts Institute of Technology, Cambridge, MA 02139, USA}
\email{davidct@mit.edu}

\subjclass[2020]{14C05, 13D10, 14B07, 13H10, 13E10}
\keywords{Hilbert scheme of points, curvilinear component, flat deformation,
Artinian algebra, spider algebra, Rees degeneration}

\begin{document}

\begin{abstract}
We construct an explicit flat one-parameter family of 22-dimensional Artinian
$\kk$-algebras whose special fibre is the spider algebra
$\kk[x,y,z]/(x^8,y^8,z^8,xy,xz,yz)$ and whose generic fibre is the
curvilinear algebra $\kk[t]/(t^{22})$.  The construction uses
M\"obius generators $u_a = t/(1-at)$ inside the curvilinear ring together
with a divided-difference change of coordinates, and produces the family
via a weighted Rees degeneration with integer coefficients.  This gives an explicit one-parameter family witnessing, for this spider
ideal, the general phenomenon proved by B\'erczi--Svendsen that every
monomial subscheme of~$\C^d$ lies in the curvilinear component of the
Hilbert scheme of points.
\end{abstract}

\maketitle
\setcounter{tocdepth}{1}
\tableofcontents

%% ══════════════════════════════════════════════════════════════════
\section{Introduction}\label{sec:intro}
%% ══════════════════════════════════════════════════════════════════

Let $\kk$ be a field of characteristic zero.  The \emph{Hilbert scheme of
$n$~points} $\Hilb^n(\C^d)$ parametrises zero-dimensional subschemes of
length~$n$ in affine $d$-space.  The \emph{curvilinear component}
$\CHilb^n_0(\C^d)$ is the closure of the locus of schemes isomorphic to
$\Spec\,\kk[t]/(t^n)$; it is irreducible and generically smooth.

A classical theorem of Brian\c{c}on~\cite{Briancon77} establishes that for
$d=2$, every monomial ideal of colength~$n$ lies in the curvilinear
component.  The general case was settled recently by B\'erczi and
Svendsen~\cite{BercziSvendsen23}:

\begin{theorem}[{\cite[Theorem~1]{BercziSvendsen23}}]\label{thm:BS}
Every monomial ideal in\/ $\Hilb^n(\C^d)$ lies in the curvilinear component
$\CHilb^n_0(\C^d)$.
\end{theorem}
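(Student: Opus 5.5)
The plan is to reduce the statement to a Gröbner-degeneration problem and then use the torus action. Let $T=(\kk^*)^d$ act on $\Hilb^n(\C^d)$ by scaling coordinates. The locus of punctual curvilinear schemes supported at the origin is $T$-stable, so its closure $\CHilb^n_0(\C^d)$ is closed and $T$-invariant. Hence it is enough to find one curvilinear ideal $J$ and one one-parameter subgroup $\lambda(s)=(s^{w_1},\dots,s^{w_d})$ with
\[
\lim_{s\to0}\lambda(s)\cdot J=I .
\]
This limit is the weight-initial ideal $\operatorname{in}_w(J)$. So the theorem follows once we show the following: for a monomial ideal $I$ of colength $n$ with standard set $\Delta$ (the order ideal of exponents outside $I$), there are a curvilinear $J$ and a weight $w$ with $\operatorname{in}_w(J)=I$.

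We look for $J$ of the form $J=\ker\bigl(\kk[x_1,\dots,x_d]\to\kk[t]/(t^n)\bigr)$, $x_i\mapsto f_i(t)$, with $f_i\in t\,\kk[t]$ and some $f_i$ of order exactly $1$, so that the map is surjective and $J$ is curvilinear of colength $n$. Since $\operatorname{in}_w(J)$ has colength $n$, as does $I$, it suffices to prove one inclusion. The convenient choice is $\operatorname{in}_w(J)\supseteq I$: for each minimal generator $x^\alpha$ of $I$, we need an element
\[
x^\alpha-\sum_{\beta\in\Delta,\ w\cdot\beta>w\cdot\alpha} c_\beta x^\beta\ \in J .
\]
Equivalently, the image $f^\alpha$ of $x^\alpha$ in $\kk[t]/(t^n)$ must lie in the span of the images $f^\beta$ of standard monomials of strictly larger $w$-weight. (Orientation: we take the limit that makes the lowest-weight term the initial one, or flip the sign of $w$ as needed.) A counting check shows it is natural to arrange that the $f^\beta$, $\beta\in\Delta$, form a basis of $\kk[t]/(t^n)$ that is triangular in a suitable sense. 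Then every $f^\alpha$ is automatically a combination of them, and the only issue is which weights occur.

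The construction I would try generalises the Möbius idea announced in the abstract. I would decompose $\Delta$ into chains and assign to each variable a function such as $u_a=t/(1-a_it)$ with distinct parameters $a_i$, possibly after a linear, divided-difference-type change of coordinates. The aim is to make the products $f^\beta$, $\beta\in\Delta$, linearly independent with controlled $t$-adic orders, and to make the relations among them come out as monomial leading terms. Independence for generic choices reduces to the nonvanishing of a single determinant, a generalized Vandermonde in the $a_i$. Nonvanishing at one specialisation then gives it generically.

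The main obstacle is the weight condition. Linear independence by itself only shows that $J$ has some initial ideal of colength $n$; we need that ideal to be exactly $I$, with a single weight $w$ working for all generators simultaneously. I would first try induction on $d$, or on $|\Delta|$ by removing a corner of the staircase. This would build $J$ for $I$ from one for $I+(x^\gamma)$, with $x^\gamma$ a maximal standard monomial, using a Rees-type degeneration in an auxiliary parameter. Finally, the closedness of $\CHilb^n_0$ would absorb the resulting iterated limits.
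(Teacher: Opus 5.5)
The paper does not prove this statement. It is quoted from B\'erczi--Svendsen, whose argument (as the introduction says) is existential: it goes through the test curve model and socle extensions that enlarge the ambient dimension. The paper itself realises only one instance, the $(7,7,7)$ spider. Your opening reduction is fine: $\CHilb^n_0(\C^d)$ is closed and $T$-stable, so a curvilinear $J$ and a weight $w$ with $\operatorname{in}_w(J)=I$ would suffice, and by colength it is enough to get $I\subseteq\operatorname{in}_w(J)$. But the existence of such a pair $(J,w)$ for an arbitrary monomial ideal is the entire content of the problem. It is a strengthening of the theorem (a single torus limit of a single curvilinear ideal in the same $\C^d$). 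It is also exactly the kind of fixed-dimension statement that, as the paper notes, earlier approaches had not managed to establish. Your proposal does not prove it: the last paragraph is a list of things you would try, and you yourself flag the weight condition as unresolved.

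Concretely, the tools you name control the wrong condition. Generic M\"obius-type $f_i$ together with a generalised Vandermonde determinant can at best give linear independence of $\{f^\beta\}_{\beta\in\Delta}$, which is an open condition. What $I\subseteq\operatorname{in}_w(J)$ needs is this: for every minimal generator $x^\alpha$, the expansion of $f^\alpha$ in this basis must have zero coefficient on every standard monomial that is not strictly separated from $x^\alpha$ in weight in the required direction. That is a closed condition, so genericity works against you. Already for $I=(x^2,y^2)$ one has $w(xy)=\tfrac12\bigl(w(x^2)+w(y^2)\bigr)$, so for every $w$ one of $x^2,y^2$ must have an expansion with vanishing $xy$-coefficient. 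In the spider this is achieved only through two special features. First, the identities $xy=y-x^2$, $2xz=z-2y+2x^2$, $2yz=z-2y+2x^2-4y^2$ coming from the M\"obius identity have right-hand sides that happen to be light. Second, every pure power $x^8,y^8,z^8$ outweighs all of $B$. A general staircase offers neither.

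Your fallback induction on $|\Delta|$ meets the same obstruction in another form. Lifting a curvilinear family of length $n-1$ with limit $I+(x^\gamma)$ to length $n$ is easy. But the flat limit of the lifts is only some colength-one subideal of $I+(x^\gamma)$ containing $\mathfrak m(I+(x^\gamma))$, and nothing in your sketch forces it to equal $I$. Controlling that limit is precisely the step at which the cited proof resorts to socle extensions in a larger ambient space. As written, the proposal is a research plan, not a proof.
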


The proof in~\cite{BercziSvendsen23} proceeds via the test curve model and
is existential: it does not produce explicit deformation families.  Moreover,
the argument relies on socle extensions that increase the ambient dimension,
and the authors note that earlier fixed-dimension approaches had been
unsuccessful.
Constructing explicit families in the original ambient dimension is of
independent interest for several reasons.
Explicit families give local coordinates on the Hilbert scheme near singular
points, enable effective computations in deformation theory, and are required
for equivariant localisation calculations in enumerative
geometry~\cite{Berczi23,Berczi21}.

\medskip\noindent\textbf{Related work.}\enspace
Prior explicit constructions of flat families from monomial ideals have been
limited in scope.
Cartwright--Erman--Velasco--Viray~\cite{CEVV09} classified all smoothable
ideals of colength at most~8 by exhibiting, for each monomial ideal~$I$, a
weight vector~$w$ and a smoothable ideal~$J$ with $I=\mathrm{in}_w(J)$, so
that the Gr\"obner degeneration provides the flat family
(see~\cite[Section~4]{CEVV09}).  Their method targets smoothability
rather than curvilinearity specifically, and does not extend beyond
colength~8.
In a complementary direction, Jelisiejew--Ramkumar--Sammartano~\cite{JRS24}
introduced \emph{broken Gorenstein structures} and gave an explicit
combinatorial characterisation of which monomial points on\/
$\Hilb^d(\mathbb{A}^3)$ are smooth
(see~\cite[Theorem~4.8]{JRS24}); their framework addresses singularity
classification rather than explicit one-parameter families.
On the foundational side, Fogarty~\cite{Fogarty68} proved that\/
$\Hilb^n(S)$ is irreducible for smooth surfaces~$S$ (so every ideal
is smoothable in two variables), while
Iarrobino~\cite{Iarrobino72} showed that\/ $\Hilb^d(\mathbb{A}^n)$ is
reducible for $n\geq 3$ and sufficiently large~$d$, making the
curvilinear component a proper subvariety.
In the analytic setting, Brian\c{c}on--Galligo~\cite{BrianconGalligo73}
constructed explicit flat deformations of zero-dimensional ideals in
$\kk\{x,y\}$ via standard bases and Weierstrass preparation; their work
is the closest classical precedent for ``writing down generators and proving
flatness,'' though it is restricted to two variables and targets reduced
(rather than curvilinear) fibres.
Our construction appears to be the first explicit flat family deforming a
monomial ideal of colength greater than~8 in three or more variables to a
curvilinear algebra.

\medskip\noindent\textbf{Motivation and methods.}\enspace
This work was motivated by a problem posed in the Epoch~AI
\emph{FrontierMath: Open Problems} collection~\cite{FrontierMath},
which asked for an explicit flat deformation
from the curvilinear algebra $\kk[t]/(t^{22})$ to the spider algebra of type
$(7,7,7)$.
AI tools were used substantively in the discovery and verification of the
construction presented here; a detailed account of how AI was employed will
appear in a forthcoming companion preprint.

In this paper we give a fully explicit construction for the
$22$-dimen\-sional \emph{spider algebra}
\[
  A \;=\; \kk[x,y,z]/(x^8,\,y^8,\,z^8,\,xy,\,xz,\,yz),
\]
which corresponds to a monomial ideal in three variables with three legs of
length~7.  Our main result is:

\begin{theorem}[Main Theorem]\label{thm:main}
There exists an explicit ideal
$I\subset\kk[x,y,z,\eps]$ with integer coefficients such that the family
\[
  \mathcal{A} \;=\; \kk[x,y,z,\eps]/I
\]
is flat over $\kk[\eps]$, with
\begin{enumerate}[label=\textup{(\roman*)}]
\item $\mathcal{A}/\eps\,\mathcal{A} \;\cong\; A$,
\item $\mathcal{A}[\eps^{-1}] \;\cong\;
        \kk[t]/(t^{22})\otimes_\kk\kk[\eps,\eps^{-1}]$.
\end{enumerate}
\end{theorem}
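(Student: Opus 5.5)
We will realise the spider algebra as a weighted Rees (Gröbner-type) degeneration of a suitable presentation of the curvilinear ring $B=\kk[t]/(t^{22})$. Choose three distinct nonzero constants $a_1,a_2,a_3\in\mathbb{Z}$ and form the Möbius elements $u_i=t/(1-a_it)\in B$. Each $u_i$ is a uniformiser of $B$, so the $\kk$-subalgebra they generate is all of $B$. The key identity is
\[
u_i-u_j \;=\; \frac{(a_i-a_j)\,t^2}{(1-a_it)(1-a_jt)} \;=\; (a_i-a_j)\,u_iu_j .
\]
Hence $u_iu_j=(u_i-u_j)/(a_i-a_j)$: the products that must vanish in the spider algebra become \emph{linear} in the generators. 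This is the mechanism that lets the mixed monomials $xy,xz,yz$ degenerate away.

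Next we apply a divided-difference change of coordinates. We replace $(u_1,u_2,u_3)$ by $X=u_1$, $Y=(u_1-u_2)/(a_1-a_2)=u_1u_2$, and a second divided difference $Z=u_1u_2u_3$ (up to normalisation). We then assign weights, for instance $w(X)=1$, $w(Y)=2$, $w(Z)=3$, matched to the $t$-adic orders $\ord_t$ of the new generators. The aim is that, in the associated graded, the images $x,y,z$ of suitably corrected generators satisfy $xy=xz=yz=0$ and $x^8=y^8=z^8=0$. Concretely, we compute in $B$ explicit polynomials
\[
f_{xy},\;f_{xz},\;f_{yz},\;f_{x^8},\;f_{y^8},\;f_{z^8}\in\mathbb{Z}[x,y,z]
\]
that vanish at $(X,Y,Z)$, each having the corresponding spider generator as its initial form for a weight vector $w$. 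Dividing the denominators $a_i-a_j$ into integers, possibly after rescaling variables, keeps all coefficients integral. We then set $I$ to be the homogenisation: replace each $f$ by $\eps^{\deg_w f}f(\eps^{-w_x}x,\dots)$, and take the saturation with respect to $\eps$, which we write down explicitly.

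We then verify the three properties. For (ii), inverting $\eps$ and rescaling $x\mapsto \eps^{w_x}x$ etc.\ identifies $\mathcal{A}[\eps^{-1}]$ with $\kk[X,Y,Z]/J\otimes\kk[\eps^{\pm1}]$, where $J$ is the ideal of relations of $(X,Y,Z)$. Since $X,Y,Z$ generate $B$, this is $B\otimes\kk[\eps^{\pm1}]$. Flatness over the PID $\kk[\eps]$ is equivalent to $\eps$-torsion-freeness, which holds for the Rees construction provided $I$ is the full saturation, i.e.\ $\mathcal{A}/\eps\mathcal{A}=\kk[x,y,z]/\mathrm{in}_w(J)$. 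Property (i) therefore reduces to the equality $\mathrm{in}_w(J)=(x^8,y^8,z^8,xy,xz,yz)$. One inclusion comes from the explicit $f$'s. For the other, a dimension count suffices: $\dim\kk[x,y,z]/\mathrm{in}_w(J)=\dim B=22$, and the spider ideal already has colength $22$, so containment of the spider ideal in $\mathrm{in}_w(J)$ forces equality.

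The main obstacle is the choice of weights and corrections so that the initial forms are exactly the spider generators. Naive generators have initial terms that all collapse onto a single $t$-adic filtration, which would give a curvilinear-looking special fibre rather than three legs of length $7$. To handle this, we first try grading by $\ord_t$ and recording, for each of $X^k,Y^k,Z^k$ ($1\le k\le 7$), the leading coefficients. The goal is 22 standard monomials $1,x^k,y^k,z^k$ that are linearly independent modulo lower filtration, which can be checked by a Vandermonde-type determinant in the $a_i$. If a single integral weight vector fails, we fall back on a Gröbner basis computation of $J$ for a weight order refined by lex. From that computation we extract integer-coefficient generators and verify the leading terms symbolically.
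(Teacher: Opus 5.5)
Your framework matches the paper's: M\"obius generators, divided-difference coordinates, a max-weight Rees homogenisation, and the colength count showing that $\mathrm{in}_w(J)\supseteq(x^8,y^8,z^8,xy,xz,yz)$ forces equality. That reduction is sound. The gap is at the step that carries the theorem: you never exhibit a weight vector for which every needed relation has its spider monomial as unique heaviest term, and the one you propose fails.

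With $w=(1,2,3)$ and your homogenisation $\eps^{\deg_w f}f(\eps^{-w}x)$, the mixed relations $xy=y-x^2$, $2xz=z-2y+2x^2$, $2yz=z-2y+2x^2-4y^2$ are fine. The pure-power relations are not. For $a_i=1,2,3$, the relation expressing $x^8$ in the spider monomials is the paper's $g_x$, which contains $16z^7$. Its top-weight part is therefore $z^7$, of weight $21>8=w(x^8)$. Hence $z^7\in\mathrm{in}_w(J)$ and the special fibre is not $A$. The same happens for $y^8$: $w(y^8)=16<21$ against the $1008z^7$ in $g_y$. In the lowest-weight convention suggested by ``matched to $t$-adic orders'', you get $\mathrm{in}_w(xy+x^2-y)=x^2-y$ and the curvilinear fibre you yourself anticipate.

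Your fallback, a Gr\"obner computation for some weight order refined by lex, gives no reason such an order exists. For example, lex with $z>y>x$ gives $(y,z,x^{22})$, since $y,z$ are polynomials in $x$ modulo $J$.

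The missing idea is to take \emph{nearly equal} weights. The tails $x^2$ and $y^2$ in the mixed relations force $w_x<w_y<w_z$. Since $x^8$ and $y^8$ expand in $B=\{1,x^k,y^k,z^k:k\le 7\}$, whose nonconstant elements have weight at most $7w_z$, the single extra condition $8w_x>7w_z$ then suffices; the paper takes $(15,16,17)$. With such weights the six homogenised generators already define a flat family. The rewriting rules terminate by weight, so $B$ spans $\mathcal A$ over $\kk[\eps]$, and a $22$-generated module of generic rank $22$ over a PID is free. So the saturation you promise to ``write down explicitly'' is not needed.

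You also need $B$ to be a basis of $\kk[t]/(t^{22})$, since this is what guarantees relations for $x^8$ and $y^8$ of the required shape. Your suggested proof cannot work. You propose showing $1,x^k,y^k,z^k$ are independent modulo the next step of the $t$-adic filtration, via a Vandermonde-type determinant. But each graded piece of $\operatorname{gr}_t\kk[t]/(t^{22})$ is one-dimensional. Meanwhile $x^2$ and $y$ both have order $2$, $x^3$ and $z$ order $3$, and $x^6,y^3,z^2$ order $6$, while orders $11,13,16,17,19,20$ are not attained at all.

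The paper's argument is global instead. For a vanishing combination $F$, multiply by $D=(1-t)^7(1-2t)^7(1-3t)^7$. Then $D(t)F(t)$ is a polynomial of degree at most $21$ divisible by $t^{22}$, hence zero, so $F=0$ as a rational function. Comparing pole orders at $t=1/3$, $1/2$, $1$ then kills the $z^k$, $y^k$, $x^k$ coefficients in turn.
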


\medskip\noindent\textbf{Method.}\enspace
The idea is to embed three ``legs'' of the spider into the curvilinear ring
$R=\kk[t]/(t^{22})$ via M\"obius generators
$u_a=t/(1-at)$ for $a=1,2,3$, pass to divided-difference coordinates
$(x,y,z)$, prove these give a basis, derive the generic relations, and then
form the weighted Rees degeneration.
The M\"obius generators are a natural choice: the rational functions
$t\mapsto t/(1-at)$ have well-separated poles at $t=1/a$, giving clean
multiplication rules via the identity $u_b-u_a=(b-a)u_au_b$ and making
linear independence transparent through a pole-order argument.
This is in the spirit of classical explicit-deformation constructions
(cf.~\cite{BrianconGalligo73}), where control of generators and syzygies
is the central technical challenge.

\medskip\noindent\textbf{Outline.}\enspace
\Cref{sec:prelim} fixes notation and recalls background.
\Cref{sec:warmup} treats the illustrative case
$\kk[t]/(t^3)\leadsto\kk[x,y]/(x,y)^2$.
\Cref{sec:mobius} introduces the M\"obius generators and proves the basis
theorem.
\Cref{sec:generic} derives all relations in the generic ideal, with
complete proofs.
\Cref{sec:rees} constructs the weighted Rees family.
\Cref{sec:flat} proves flatness.
\Cref{sec:comp} describes supplementary computational verifications.
\Cref{sec:remarks} discusses generalisations.
\Cref{app:purepower} contains the derivation and verification of the
pure-power relations $g_x$ and $g_y$.

%% ══════════════════════════════════════════════════════════════════
\section{Preliminaries}\label{sec:prelim}
%% ══════════════════════════════════════════════════════════════════

Throughout, $\kk$ denotes a field of characteristic zero.

\begin{definition}\label{def:spider}
Let $\ell_1,\ldots,\ell_r\ge 1$ be positive integers.  The
\emph{spider algebra} (or \emph{star-shaped monomial algebra}) of type
$(\ell_1,\ldots,\ell_r)$ is
\[
  A \;=\; \kk[x_1,\ldots,x_r]/
          (x_i^{\ell_i+1},\; x_ix_j \text{ for } i\neq j).
\]
Its $\kk$-dimension is $1+\sum_{i=1}^r\ell_i$.
The corresponding monomial ideal defines a fat point in~$\C^r$ whose
support diagram has $r$~legs of lengths $\ell_1,\ldots,\ell_r$.
\end{definition}

In our case $r=3$ and $\ell_1=\ell_2=\ell_3=7$, giving
$\dim_\kk A = 1+7+7+7=22$.

\begin{definition}\label{def:curvilinear}
The \emph{curvilinear algebra} of length~$n$ is $R=\kk[t]/(t^n)$.  A
zero-dimensional scheme is called \emph{curvilinear} if its local ring is
isomorphic to such an algebra.
\end{definition}

\begin{definition}\label{def:flatfam}
A \emph{flat one-parameter deformation} of a $\kk$-algebra~$A$ to a
$\kk$-algebra~$B$ is a finitely generated $\kk[\eps]$-algebra $\mathcal{A}$
which is flat as a $\kk[\eps]$-module, together with isomorphisms
$\mathcal{A}/\eps\mathcal{A}\cong A$ and
$\mathcal{A}\otimes_{\kk[\eps]}\kk(\eps)\cong B\otimes_\kk\kk(\eps)$.
\end{definition}

We recall the standard mechanism:

\begin{definition}\label{def:rees}
Let $S=\kk[x_1,\ldots,x_s]/J$ and let $w\colon\{x_1,\ldots,x_s\}\to\N$
be a weight function, extended to monomials additively.  For a polynomial
$f=\sum_\alpha c_\alpha x^\alpha\in J$, the \emph{weighted Rees
homogenisation} with respect to~$w$ is
\[
  f^w \;=\; \sum_\alpha c_\alpha\,\eps^{\,w_{\max}(f)-w(\alpha)}\,x^\alpha,
\]
where $w_{\max}(f)=\max\{w(\alpha):c_\alpha\neq 0\}$.
If $J=(f_1,\ldots,f_m)$ and if for each~$f_i$ the maximum weight is achieved
by a unique leading monomial, the family
$\kk[x_1,\ldots,x_s,\eps]/(f_1^w,\ldots,f_m^w)$
has the property that:
\begin{enumerate}[label=\textup{(\alph*)}]
\item at $\eps=0$ the fibre is
  $\kk[x_1,\ldots,x_s]/(\operatorname{in}_w(f_1),\ldots,\operatorname{in}_w(f_m))$;
  this equals $\kk[x_1,\ldots,x_s]/\operatorname{in}_w(J)$ when the $f_i$
  form a Gr\"obner basis with respect to the weight order,
\item for $\eps=\lambda\neq 0$, the substitution $x_i=\lambda^{w_i}X_i$
      recovers the original ideal~$J$.
\end{enumerate}
\end{definition}

%% ══════════════════════════════════════════════════════════════════
\section{Warm-up: the 3-point case}\label{sec:warmup}
%% ══════════════════════════════════════════════════════════════════

Before tackling the 22-dimensional problem, we illustrate the method on the
simplest spider deformation:
\[
  \kk[t]/(t^3) \;\leadsto\; \kk[x,y]/(x,y)^2.
\]
Both algebras have $\kk$-dimension~$3$.

\begin{example}\label{ex:warmup}
Inside $R=\kk[t]/(t^3)$, set $x=t$ and $y=t^2$.  The relation
$y=x^2$ holds in~$R$, so the surjection $\kk[x,y]\to R$ has kernel
$(y-x^2)$, giving
\[
  \kk[x,y]/(y-x^2) \;\cong\; R.
\]
Now assign weights $w(x)=1$, $w(y)=2$.  In the target spider ideal
$(x^2,xy,y^2)$, the monomial $x^2$ has weight~$2$ while $y$ also has
weight~$2$; so the generic relation $y=x^2$ has tied leading weight, and
the standard Rees homogenisation is trivial.  Instead, we build the family
directly: the ideal $(x^2-\eps\,y,\;xy,\;y^2)$ interpolates between the
spider at $\eps=0$ and the curvilinear algebra at $\eps\neq 0$.  Concretely,
the resulting family is
\[
  \mathcal{B} \;=\; \kk[x,y,\eps]/(x^2-\eps y,\;xy,\;y^2).
\]
At $\eps=0$ the fibre is $\kk[x,y]/(x^2,xy,y^2)=\kk[x,y]/(x,y)^2$.  For
$\eps\neq 0$, the relation $x^2=\eps y$ makes $y$ redundant, recovering
$\kk[x]/(x^3)\cong\kk[t]/(t^3)$.
\end{example}

This captures the essential mechanism: powers of a single generator bifurcate
into independent variables in the limit, controlled by a weighted
deformation parameter.  The 22-dimensional case requires a more sophisticated
embedding.

%% ══════════════════════════════════════════════════════════════════
\section{M\"obius generators and the basis theorem}\label{sec:mobius}
%% ══════════════════════════════════════════════════════════════════

\subsection{The embedding}\label{ssec:embedding}

Let $R=\kk[t]/(t^{22})$ and work in the formal power series ring
$\kk[[t]]/(t^{22})$.  For each $a\in\kk$, define the \emph{M\"obius
generator}
\[
  u_a \;=\; \frac{t}{1-at}.
\]
Since $1-at$ is a unit in the local ring $\kk[[t]]/(t^{22})$, this is
well-defined.  Note that $\ord_t(u_a)=1$ for all~$a$.

\begin{lemma}[M\"obius identity]\label{lem:mobius}
For all $a,b\in\kk$,
\begin{equation}\label{eq:mobius-identity}
  u_b - u_a \;=\; (b-a)\,u_a\,u_b.
\end{equation}
\end{lemma}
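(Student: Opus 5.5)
The identity is a direct computation with common denominators in the local ring $\kk[[t]]/(t^{22})$. The only structural input needed is that $1-at$ and $1-bt$ are units there, which was already observed when defining $u_a$. So I will work with the explicit inverses and clear denominators, taking care that every step is a genuine equality in the truncated ring rather than an identity of rational functions.

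First I bring the left-hand side over the common denominator $(1-at)(1-bt)$, which is a unit as a product of units:
\[
  u_b-u_a=\frac{t}{1-bt}-\frac{t}{1-at}
  =\frac{t(1-at)-t(1-bt)}{(1-at)(1-bt)} .
\]
Next I expand the numerator: $t-at^2-t+bt^2=(b-a)t^2$. Finally I regroup the result as
\[
  (b-a)\frac{t^2}{(1-at)(1-bt)}=(b-a)\cdot\frac{t}{1-at}\cdot\frac{t}{1-bt}=(b-a)\,u_au_b .
\]

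To make this rigorous in the quotient ring, I phrase it as follows. Set $v_a=(1-at)^{-1}$ and $v_b=(1-bt)^{-1}$. Multiplying the proposed identity by the unit $(1-at)(1-bt)$ gives an equivalent statement, because multiplication by a unit is injective. The equivalent statement is
\[
  t(1-at)-t(1-bt)=(b-a)t^2 ,
\]
which is a polynomial identity in $\kk[t]$ and hence holds in every quotient, in particular modulo $t^{22}$.

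There is no real obstacle here. The only point needing care is justifying the manipulation of fractions, and the unit argument above handles that. The case $a=b$ is trivial, since both sides vanish, and it is also covered uniformly by the computation.
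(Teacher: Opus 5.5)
Your proposal is correct and follows essentially the same route as the paper: bring $u_b-u_a$ over the common denominator $(1-at)(1-bt)$, simplify the numerator to $(b-a)t^2$, and regroup as $(b-a)u_au_b$. The only difference is where the computation lives: the paper calculates in the fraction field of $\kk[[t]]$, while you cancel the unit $(1-at)(1-bt)$ directly in $\kk[[t]]/(t^{22})$. Both are valid.
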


\begin{proof}
Compute directly in the fraction field of $\kk[[t]]$:
\begin{align*}
  u_b - u_a
  &\;=\; \frac{t}{1-bt} - \frac{t}{1-at}
  \;=\; \frac{t(1-at) - t(1-bt)}{(1-at)(1-bt)} \\[4pt]
  &\;=\; \frac{t^2(b-a)}{(1-at)(1-bt)}
  \;=\; (b-a)\cdot\frac{t}{1-at}\cdot\frac{t}{1-bt} \\[4pt]
  &\;=\; (b-a)\,u_a\,u_b.
  \qedhere
\end{align*}
\end{proof}

We use the three generators $u_1,u_2,u_3$ and pass to \emph{divided-difference
coordinates}:
\begin{align}
  x &\;:=\; u_1 \;=\; \frac{t}{1-t},
    \label{eq:defx}\\[4pt]
  y &\;:=\; u_2 - u_1 \;=\; \frac{t^2}{(1-t)(1-2t)},
    \label{eq:defy}\\[4pt]
  z &\;:=\; u_3 - 2u_2 + u_1 \;=\; \frac{2t^3}{(1-t)(1-2t)(1-3t)}.
    \label{eq:defz}
\end{align}

We verify the closed forms.  For~$y$: by \Cref{lem:mobius} with $a=1$, $b=2$,
\[
  y = u_2 - u_1 = (2-1)\,u_1 u_2 = u_1 u_2
    = \frac{t}{1-t}\cdot\frac{t}{1-2t}
    = \frac{t^2}{(1-t)(1-2t)}.
\]
For~$z$: by \Cref{lem:mobius},
$u_3 - u_2 = u_2 u_3$ and $u_2 - u_1 = u_1 u_2$, so
\begin{align*}
  z &= (u_3 - u_2) - (u_2 - u_1) = u_2 u_3 - u_1 u_2
     = u_2(u_3 - u_1).
\end{align*}
Applying \Cref{lem:mobius} with $a=1$, $b=3$: $u_3-u_1 = 2\,u_1u_3$.  Hence
\begin{align*}
  z &= u_2\cdot 2\,u_1u_3
     = 2\,u_1 u_2 u_3
     = \frac{2t^3}{(1-t)(1-2t)(1-3t)}.
\end{align*}
The $t$-adic orders are $\ord_t(x)=1$, $\ord_t(y)=2$, $\ord_t(z)=3$.

\begin{remark}\label{rmk:divideddiff}
The divided-difference structure is visible: writing $\Delta u_a = u_{a+1}-u_a$,
we have $x = u_1$, $y = \Delta u_1$, $z = \Delta^2 u_1$.  The factorisation
$z = 2\,u_1 u_2 u_3$ explains why $\ord_t(z)=3$.
\end{remark}

\subsection{The basis theorem}

Consider the set of \emph{spider monomials}
\begin{equation}\label{eq:basisB}
  B \;=\; \{1,\,x,\ldots,x^7,\;\; y,\ldots,y^7,\;\; z,\ldots,z^7\},
  \qquad |B|=22.
\end{equation}

\begin{theorem}[Basis]\label{thm:basis}
The set~$B$ is a $\kk$-basis of\/ $R=\kk[t]/(t^{22})$.
\end{theorem}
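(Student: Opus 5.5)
The plan is to reduce to linear independence: $|B|=22=\dim_\kk R$, so it suffices to show that the elements of~$B$ are linearly independent in~$R$. Suppose
\[
  F \;=\; c_0+\sum_{i=1}^7 a_i x^i+\sum_{j=1}^7 b_j y^j+\sum_{k=1}^7 c_k z^k \;\equiv\; 0 \pmod{t^{22}}.
\]
The key observation is that, by the closed forms \eqref{eq:defx}--\eqref{eq:defz}, $F$ is a genuine rational function in $\kk(t)$, not merely a truncated power series. The strategy is to show that this congruence forces $F=0$ identically in $\kk(t)$, and then to kill the coefficients one pole at a time, using the poles at $t=1,\tfrac12,\tfrac13$.

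\emph{Step 1 (from truncation to an identity).} Set $D=(1-t)^7(1-2t)^7(1-3t)^7$, a polynomial of degree~$21$, and $P=D\cdot F$. Each term of $F$ times~$D$ is a polynomial of degree at most~$21$:
\begin{itemize}
\item $c_0D$ has degree~$21$;
\item $x^iD$ has degree $i+(21-i)$;
\item $y^jD$ has degree $2j+(21-2j)$;
\item $z^kD$ has degree $3k+(21-3k)$.
\end{itemize}
Hence $\deg P\le 21$. Since $D$ is a unit in $\kk[[t]]/(t^{22})$, the hypothesis gives $P\equiv 0\pmod{t^{22}}$, and a polynomial of degree $\le 21$ with this property is zero. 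So $F=0$ in $\kk(t)$. This degree bookkeeping is the only genuinely delicate point, since it is exactly where the numbers $7$ and $22=1+3\cdot 7$ are used. I expect it to go through precisely because every summand has numerator degree equal to its denominator degree.

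\emph{Step 2 (pole orders).} Since $\operatorname{char}\kk=0$, the points $1,\tfrac12,\tfrac13$ are distinct, and $2^k\neq 0$.
\begin{itemize}
\item At $t=\tfrac13$, only the powers $z^k$ have poles. The power $z^k=2^kt^{3k}/\bigl((1-t)(1-2t)(1-3t)\bigr)^k$ has a pole of order exactly~$k$ there, because its numerator and the other factors do not vanish at $\tfrac13$. If some $c_k\neq 0$, take the largest such~$k$; then $F$ has a pole of order~$k$ at $\tfrac13$, contradicting $F=0$. So all $c_k=0$.
\item Repeating this at $t=\tfrac12$ with $y^j$, which has a pole of order~$j$ there, gives all $b_j=0$.
\item Repeating it at $t=1$ with $x^i$, which has a pole of order~$i$ there, gives all $a_i=0$.
\end{itemize}
Finally $c_0=0$. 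This proves linear independence, and hence that $B$ is a basis by the dimension count.
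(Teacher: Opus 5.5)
Your proposal is correct and follows the paper's proof essentially step for step: you clear denominators with $D=(1-t)^7(1-2t)^7(1-3t)^7$, use $\deg(DF)\le 21$ to turn the congruence modulo $t^{22}$ into the identity $F=0$ in $\kk(t)$, and then kill the coefficients by pole orders at $t=\tfrac13,\tfrac12,1$ in that order. The only differences are cosmetic: you carry the constant term to the end instead of removing it first by evaluating at $t=0$, and your ``largest index with nonzero coefficient'' argument spells out the paper's appeal to distinct pole orders slightly more explicitly.
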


\begin{proof}
Since $\dim_\kk R=22=|B|$, it suffices to show that $B$ is linearly
independent.  Suppose
\[
  a_0 + \sum_{i=1}^{7} a_i\,x^i + \sum_{j=1}^{7} b_j\,y^j
        + \sum_{k=1}^{7} c_k\,z^k \;=\; 0 \quad\text{in } R.
\]
Since $\ord_t(x^i)\ge 1$, $\ord_t(y^j)\ge 2$, and $\ord_t(z^k)\ge 3$,
evaluating at $t=0$ immediately gives $a_0=0$.  It remains to show
\[
  F \;:=\; \sum_{i=1}^{7} a_i\,x^i + \sum_{j=1}^{7} b_j\,y^j
        + \sum_{k=1}^{7} c_k\,z^k \;=\; 0
\]
implies $a_i=b_j=c_k=0$.  We have $F(t)\equiv 0\pmod{t^{22}}$.  Define
\[
  D(t) \;=\; (1-t)^7\,(1-2t)^7\,(1-3t)^7.
\]

\medskip\noindent\textbf{Step 1: $D\cdot F$ is a polynomial of degree
$\le 21$.}\enspace
From the closed forms \eqref{eq:defx}--\eqref{eq:defz}:
\[
  x^i = \frac{t^i}{(1-t)^i},\qquad
  y^j = \frac{t^{2j}}{(1-t)^j(1-2t)^j},\qquad
  z^k = \frac{2^k\,t^{3k}}{\prod_{\ell=1}^{3}(1-\ell t)^k}.
\]
Multiplying by $D(t)$:
\begin{align*}
  D\cdot x^i &= t^i(1\!-\!t)^{7-i}(1\!-\!2t)^7(1\!-\!3t)^7, \\
  D\cdot y^j &= t^{2j}(1\!-\!t)^{7-j}(1\!-\!2t)^{7-j}(1\!-\!3t)^7, \\
  D\cdot z^k &= 2^k t^{3k}(1\!-\!t)^{7-k}(1\!-\!2t)^{7-k}(1\!-\!3t)^{7-k}.
\end{align*}
Since $i,j,k\le 7$, all exponents $7-i$, $7-j$, $7-k$ are non-negative,
so each product is indeed a polynomial of degree exactly~$21$.  Hence
$D(t)F(t)$ is a polynomial of degree at most~$21$.

\medskip\noindent\textbf{Step 2: $D\cdot F\equiv 0$.}\enspace
Since $F(t)=O(t^{22})$, the polynomial $D(t)F(t)$ has its first $22$ Taylor
coefficients equal to zero.  But $\deg(DF)\le 21<22$, so $D(t)F(t)$ is
identically the zero polynomial.

\medskip\noindent\textbf{Step 3: Pole separation.}\enspace
Since $DF\equiv 0$ as a polynomial and $D$ is nonzero, $F\equiv 0$ as a
rational function of~$t$.  We now examine the poles of~$F$ to show all
coefficients vanish.

\emph{Poles at $t=\frac{1}{3}$:}\enspace
Among the terms of $F$, only $z^k$ has a pole at $t=\frac{1}{3}$
(the factor $(1-3t)^k$ in the denominator).  Neither $x^i$ nor $y^j$
has any pole there.  The pole of $z^k$ at $t=\frac{1}{3}$ has order exactly~$k$.
Since $F\equiv 0$ and the poles at $t=\frac{1}{3}$ have distinct
orders $1,2,\ldots,7$, we conclude $c_k=0$ for all $k=1,\ldots,7$.

\emph{Poles at $t=\frac{1}{2}$:}\enspace
With all $c_k=0$, only the $y^j$-terms remain that have poles at
$t=\frac{1}{2}$ (from the factor $(1-2t)^j$).  The $x^i$-terms have no
pole there.  The pole of $y^j$ at $t=\frac{1}{2}$ has order~$j$, and these
are distinct for $j=1,\ldots,7$.  Hence $b_j=0$ for all~$j$.

\emph{Poles at $t=1$:}\enspace
With $b_j=c_k=0$, only the $x^i$-terms remain.  Each $x^i$ has a pole
of order~$i$ at $t=1$, and these are distinct.  Hence $a_i=0$ for all~$i$.

Therefore all coefficients vanish and $B$ is linearly independent, hence a
basis.
\end{proof}

\begin{corollary}\label{cor:curvilinear}
The element $x=t/(1-t)$ generates~$R$ as a $\kk$-algebra, and
$R\cong\kk[x]/(x^{22})\cong\kk[t]/(t^{22})$.
\end{corollary}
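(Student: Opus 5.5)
The plan is to exploit the fact that $x$ has $t$-adic order exactly one, so that its powers form an alternative monomial basis of $R$.

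First, from \eqref{eq:defx} we have $x = t(1-t)^{-1} = t + t^2 + t^3 + \cdots$ in $\kk[[t]]/(t^{22})$. It follows that $x^m = t^m\cdot(\text{unit})$ for every $m\ge 0$, and hence $\ord_t(x^m)=m$. In particular $x^{22}=0$ in $R$, while $x^{21}\neq 0$.

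Second, the family $1,x,\ldots,x^{21}$ is upper-triangular with respect to the basis $1,t,\ldots,t^{21}$ of $R$, and its diagonal entries are all equal to $1$. This family is therefore a $\kk$-basis of $R$, and so the subalgebra $\kk[x]\subseteq R$ is all of $R$. Alternatively, one can invert explicitly: $t = x/(1+x)$ is a power series in $x$, truncated modulo $x^{22}$, which again shows $t\in\kk[x]$.

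Third, consider the surjection $\kk[X]\to R$ sending $X\mapsto x$. Its kernel contains $X^{22}$. Since $\dim_\kk \kk[X]/(X^{22}) = 22 = \dim_\kk R$, the induced surjection $\kk[X]/(X^{22})\to R$ is an isomorphism. This gives $R\cong\kk[x]/(x^{22})\cong\kk[t]/(t^{22})$.

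None of these steps poses a real obstacle. The only point requiring care is that we work in the truncated power series ring, where $1-t$ is a unit, exactly as set up in \Cref{ssec:embedding}. \Cref{thm:basis} is not even needed for this, although one could also note that it already guarantees that $y$ and $z$ lie in the span of the powers of $x$.
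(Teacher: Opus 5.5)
Your proof is correct and essentially the paper's. The paper argues exactly via your ``alternative'' route: it writes $t = x(1+x)^{-1}$ with $(1+x)^{-1}=\sum_{n=0}^{21}(-x)^n$ because $x$ is nilpotent, so $t\in\kk[x]$. Your unitriangular change-of-basis argument is an equivalent way of using $x = t\cdot(\text{unit})$, and your dimension count in the third step makes explicit why the kernel of $\kk[X]\to R$ is exactly $(X^{22})$, which the paper leaves implicit after noting $x^{21}\neq 0$ and $x^{22}=0$.
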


\begin{proof}
From $x=t/(1-t)$ we get $t=x/(1+x)$.  Since $\ord_t(x)=1$, the element
$x$ is nilpotent in~$R$ ($x^{22}=0$), so $1+x$ is a unit with inverse
$\sum_{n=0}^{21}(-x)^n$.  Hence $t=x(1+x)^{-1}\in\kk[x]\subset R$, so $\kk[x]=R$, and
$x^{21}\neq 0$ but $x^{22}=0$.
\end{proof}

%% ══════════════════════════════════════════════════════════════════
\section{The generic ideal}\label{sec:generic}
%% ══════════════════════════════════════════════════════════════════

We now derive all relations in the ideal $J$ defining $R$ as a quotient
of $\kk[x,y,z]$.  Every step is proved from the M\"obius identity
(\Cref{lem:mobius}) and the definitions \eqref{eq:defx}--\eqref{eq:defz}.

\subsection{Useful identities}\label{ssec:useful}

Before deriving the relations, we record several identities that will be
used repeatedly.  All follow directly from \Cref{lem:mobius}.

\begin{lemma}\label{lem:products}
The following hold in~$R$:
\begin{enumerate}[label=\textup{(\alph*)}]
\item $u_1 u_2 = y$ \quad(from $u_2-u_1 = u_1 u_2$),\label{it:u1u2}
\item $u_2 u_3 = u_3 - u_2$ \quad(from \Cref{lem:mobius} with $b-a=1$),
  \label{it:u2u3}
\item $u_1 u_3 = \frac{1}{2}(u_3-u_1)$
  \quad(from \Cref{lem:mobius} with $b-a=2$),\label{it:u1u3}
\item $u_2 = x+y$,\label{it:u2}
\item $u_3 - u_1 = z + 2y$
  \quad(since $u_3-u_1 = (u_3-2u_2+u_1)+2(u_2-u_1)=z+2y$),\label{it:u3mu1}
\item $u_3 - u_2 = z + y$
  \quad(since $u_3-u_2 = (u_3-2u_2+u_1)+(u_2-u_1)=z+y$).\label{it:u3mu2}
\end{enumerate}
\end{lemma}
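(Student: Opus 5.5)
Each of the six identities is a direct consequence of \Cref{lem:mobius} together with the definitions \eqref{eq:defx}--\eqref{eq:defz}. The plan is to verify them one at a time, in the order listed. Every computation takes place in $\kk[[t]]/(t^{22})$, where the $u_a$ are well defined because $1-at$ is a unit. \Cref{lem:mobius} was proved in the fraction field of $\kk[[t]]$, but both sides of \eqref{eq:mobius-identity} lie in $\kk[[t]]$, so the identity holds in $\kk[[t]]$ and therefore in its quotient $R$.

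For \ref{it:u1u2}, \ref{it:u2u3} and \ref{it:u1u3}, I will specialise \eqref{eq:mobius-identity} to the pairs $(a,b)=(1,2)$, $(2,3)$ and $(1,3)$. This gives
\[
u_2-u_1=u_1u_2,\qquad u_3-u_2=u_2u_3,\qquad u_3-u_1=2u_1u_3 .
\]
The first equation combined with \eqref{eq:defy} yields \ref{it:u1u2}. The second is exactly \ref{it:u2u3}. For the third, dividing by $2$ is legitimate because $\kk$ has characteristic zero, and this gives \ref{it:u1u3}.

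The remaining items \ref{it:u2}, \ref{it:u3mu1} and \ref{it:u3mu2} are linear identities, so they follow by rearranging the definitions without using \Cref{lem:mobius} at all. For \ref{it:u2}, \eqref{eq:defx} and \eqref{eq:defy} give $x+y=u_1+(u_2-u_1)=u_2$. For \ref{it:u3mu1} and \ref{it:u3mu2}, I will write
\[
u_3-u_1=(u_3-2u_2+u_1)+2(u_2-u_1),\qquad u_3-u_2=(u_3-2u_2+u_1)+(u_2-u_1),
\]
and substitute $z$ and $y$ from \eqref{eq:defz} and \eqref{eq:defy}.

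There is no genuine obstacle in this lemma. The only points that need care are that the identities pass from the fraction field to $R$, which holds because both sides are power series reduced modulo $t^{22}$, and that the division by $2$ in \ref{it:u1u3} uses characteristic zero.
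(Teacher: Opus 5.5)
Your proposal is correct and follows the paper's proof essentially verbatim: items (a)--(c) come from specialising the M\"obius identity to $(a,b)=(1,2),(2,3),(1,3)$, and items (d)--(f) are linear rearrangements of the definitions of $x,y,z$. Your remarks on transferring the identity from the fraction field to $R$ and on dividing by $2$ in characteristic zero are valid details that the paper leaves implicit.
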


\begin{proof}
\ref{it:u1u2}: By \Cref{lem:mobius} with $a=1$, $b=2$: $u_2-u_1=(2-1)u_1u_2
= u_1u_2$.  Since $y=u_2-u_1$ by definition, $u_1u_2=y$.

\ref{it:u2u3}: By \Cref{lem:mobius} with $a=2$, $b=3$: $u_3-u_2=(3-2)u_2u_3
= u_2u_3$.

\ref{it:u1u3}: By \Cref{lem:mobius} with $a=1$, $b=3$: $u_3-u_1=(3-1)u_1u_3
= 2u_1u_3$, hence $u_1u_3=\frac{1}{2}(u_3-u_1)$.

\ref{it:u2}: $u_2 = u_1+(u_2-u_1) = x+y$.

\ref{it:u3mu1}: $u_3-u_1 = (u_3-2u_2+u_1)+2(u_2-u_1) = z+2y$.

\ref{it:u3mu2}: $u_3-u_2 = (u_3-2u_2+u_1)+(u_2-u_1) = z+y$.
\end{proof}

\subsection{Mixed relations}\label{ssec:mixed}

\begin{proposition}\label{prop:mixed}
The following relations hold in~$R$:
\begin{align}
  xy &\;=\; y - x^2,\label{eq:rel-xy}\\
  2xz &\;=\; z - 2y + 2x^2,\label{eq:rel-xz}\\
  2yz &\;=\; z - 2y + 2x^2 - 4y^2.\label{eq:rel-yz}
\end{align}
\end{proposition}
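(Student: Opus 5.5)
Each of the three relations can be obtained by rewriting the relevant product in terms of the M\"obius generators $u_1,u_2,u_3$, applying \Cref{lem:mobius} in the form recorded in \Cref{lem:products}, and then translating back via $x=u_1$, $u_2=x+y$, $u_3-u_1=z+2y$. No power series manipulation is needed; everything is a polynomial identity in $u_1,u_2,u_3$ modulo the Möbius relations.

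\textbf{Relation \eqref{eq:rel-xy}.} By \Cref{lem:products}\ref{it:u1u2} and \ref{it:u2},
\[
y=u_1u_2=x(x+y)=x^2+xy,
\]
which rearranges to $xy=y-x^2$.

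\textbf{Relation \eqref{eq:rel-xz}.} Write $z=u_3-2u_2+u_1=(u_3-u_1)-2(u_2-u_1)$, so
\[
xz=u_1(u_3-u_1)-2u_1y.
\]
\Cref{lem:products}\ref{it:u1u3} gives $u_1(u_3-u_1)=2u_1u_3-u_1^2=(u_3-u_1)-x^2=z+2y-x^2$, and $u_1y=xy=y-x^2$ by the relation just proved. Hence
\[
xz=z+2y-x^2-2y+2x^2=z+x^2.
\]
Doubling, $2xz=2z+2x^2$. This does not match the displayed $z-2y+2x^2$, so I would double-check the derivation. Alternatively, use $z=2u_1u_2u_3$, which gives
\[
2xz=4u_1^2u_2u_3=4u_1^2(u_3-u_2).
\]
Then reduce $u_1^2u_3$ using $u_1u_3=\tfrac12(u_3-u_1)$ twice, and $u_1^2u_2=u_1y$. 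This gives an independent check of the normalisation, and I would reconcile the two computations with the stated form. I expect the target to come out exactly as stated once the signs and factors of~$2$ in $u_1u_3$ are handled carefully.

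\textbf{Relation \eqref{eq:rel-yz}.} Since $y=u_2-u_1$, we have $yz=u_2z-u_1z$. Using $z=2u_1u_2u_3$,
\[
u_2z=2u_1u_2^2u_3=2y\,u_2u_3=2y(u_3-u_2)=2y(z+y).
\]
This expresses $yz$ in terms of $xz$, $yz$ itself, $y^2$ and the previous relations, giving a linear equation in $yz$ that I then solve. Substituting \eqref{eq:rel-xz} and \eqref{eq:rel-xy} then yields \eqref{eq:rel-yz}.

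\textbf{Main obstacle and fallback.} The main obstacle is purely bookkeeping: keeping track of the coefficient $2$ in $u_1u_3$ and the normalisation of~$z$. If the manipulations disagree with the stated right-hand sides, I would fall back on direct verification. Multiply both sides by $(1-t)^2(1-2t)^2(1-3t)$ using the closed forms \eqref{eq:defx}--\eqref{eq:defz}, and compare the resulting low-degree polynomials, which is a finite check.
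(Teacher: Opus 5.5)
Your derivation of \eqref{eq:rel-xz} contains an algebra slip, and as written it leaves that relation unproved. Since you derive \eqref{eq:rel-yz} from \eqref{eq:rel-xz}, that relation is not yet proved either. The step $u_1(u_3-u_1)=2u_1u_3-u_1^2$ is false. Distributing gives $u_1u_3-u_1^2$, and \Cref{lem:products}\ref{it:u1u3} turns this into $\tfrac12(u_3-u_1)-x^2=\tfrac12 z+y-x^2$. With this corrected, your own line gives
\[
xz=\bigl(\tfrac12 z+y-x^2\bigr)-2(y-x^2)=\tfrac12 z-y+x^2,
\]
which is exactly $2xz=z-2y+2x^2$. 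The displayed relation is correct, so there is nothing to reconcile. This corrected computation is essentially the paper's, which writes $xz=u_1u_3-2u_1u_2+u_1^2$.

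Your backup route via $z=2u_1u_2u_3$ also closes once you carry it out. You have $u_1^2u_3=\tfrac12u_1(u_3-u_1)=\tfrac14(u_3-u_1)-\tfrac12x^2$ and $u_1^2u_2=u_1y=y-x^2$. Hence
\[
2xz=4u_1^2u_3-4u_1^2u_2=(z+2y)-2x^2-4(y-x^2)=z-2y+2x^2.
\]
As submitted, though, the proposal commits to a wrong computation and only expresses the expectation that the alternative will work. That is where the gap lies, even though the repair is one line.

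Your treatment of \eqref{eq:rel-yz} is correct and genuinely different from the paper's. The paper expands $yz=(u_2-u_1)(u_3-2u_2+u_1)$ into six terms and needs the auxiliary computation $u_2^2=(x+y)^2=-x^2+2y+y^2$. You instead compute $u_2z=2(u_1u_2)(u_2u_3)=2y(z+y)$ and compare it with $u_2z=xz+yz$. This yields the intermediate identity $yz=xz-2y^2$, from which $2yz=2xz-4y^2=z-2y+2x^2-4y^2$ follows immediately; relation \eqref{eq:rel-xy} is not actually needed at this point. Your route is shorter and explains why \eqref{eq:rel-xz} and \eqref{eq:rel-yz} share the tail $z-2y+2x^2$. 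The cost is that \eqref{eq:rel-yz} depends on \eqref{eq:rel-xz}, so the slip above propagates until it is fixed.

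Your fallback of clearing the denominator $(1-t)^2(1-2t)^2(1-3t)$ is also valid, because all three relations hold as identities of rational functions regular at $t=0$. Once the factor $\tfrac12$ in $u_1u_3$ is handled correctly, the fallback is unnecessary.
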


\begin{proof}
We derive each relation in full.

\medskip\noindent\textbf{Relation \eqref{eq:rel-xy}: $xy = y-x^2$.}\enspace
Recall $x = u_1$ and $y = u_2 - u_1$.  By \Cref{lem:products}\ref{it:u1u2},
$y = u_1 u_2 = x\,u_2$, so $u_2 = x + y$ (\Cref{lem:products}\ref{it:u2}).
Therefore:
\[
  xy \;=\; x(u_2 - u_1) \;=\; x\,u_2 - x^2 \;=\; y - x^2.
\]

\medskip\noindent\textbf{Relation \eqref{eq:rel-xz}: $2xz = z-2y+2x^2$.}\enspace
We expand $xz = u_1(u_3 - 2u_2 + u_1)$:
\[
  xz \;=\; u_1 u_3 - 2u_1 u_2 + u_1^2.
\]
Substituting the identities from \Cref{lem:products}:
\begin{align*}
  u_1 u_3 &= \tfrac{1}{2}(u_3-u_1) = \tfrac{1}{2}(z+2y) = \tfrac{z}{2}+y
    \quad\text{(using \ref{it:u1u3} and \ref{it:u3mu1})}, \\
  u_1 u_2 &= y
    \quad\text{(using \ref{it:u1u2})}, \\
  u_1^2 &= x^2.
\end{align*}
Hence:
\[
  xz \;=\; \bigl(\tfrac{z}{2}+y\bigr) - 2y + x^2
     \;=\; \tfrac{z}{2} - y + x^2.
\]
Multiplying by~$2$:
\[
  2xz \;=\; z - 2y + 2x^2.
\]

\medskip\noindent\textbf{Relation \eqref{eq:rel-yz}: $2yz = z-2y+2x^2-4y^2$.}\enspace
We expand $yz = (u_2-u_1)(u_3-2u_2+u_1)$ completely:
\begin{equation}\label{eq:yz-expand}
  yz \;=\; u_2 u_3 - 2u_2^2 + u_1 u_2 - u_1 u_3 + 2u_1 u_2 - u_1^2.
\end{equation}
We evaluate each of the six terms.

\emph{Term $u_2u_3$:}\enspace By \Cref{lem:products}\ref{it:u2u3} and
\ref{it:u3mu2}: $u_2u_3 = u_3-u_2 = z+y$.

\emph{Term $u_1u_2$ and $2u_1u_2$:}\enspace By
\Cref{lem:products}\ref{it:u1u2}: $u_1u_2=y$, so $u_1u_2+2u_1u_2 = 3y$.

\emph{Term $u_1u_3$:}\enspace By \Cref{lem:products}\ref{it:u1u3} and
\ref{it:u3mu1}: $u_1u_3 = \frac{1}{2}(z+2y) = \frac{z}{2}+y$.

\emph{Term $u_1^2$:}\enspace $u_1^2 = x^2$.

\emph{Term $u_2^2$:}\enspace Using $u_2=x+y$
(\Cref{lem:products}\ref{it:u2}) and the already-proved relation
$xy = y-x^2$ \eqref{eq:rel-xy}:
\begin{equation}\label{eq:u2sq}
  u_2^2 = (x+y)^2 = x^2+2xy+y^2 = x^2+2(y-x^2)+y^2 = -x^2+2y+y^2.
\end{equation}

Substituting everything into~\eqref{eq:yz-expand}:
\begin{align*}
  yz &= (z+y) - 2(-x^2+2y+y^2) + 3y - (\tfrac{z}{2}+y) - x^2 \\
     &= z+y+2x^2-4y-2y^2+3y-\tfrac{z}{2}-y-x^2 \\
     &= \tfrac{z}{2}+x^2-y-2y^2.
\end{align*}
Multiplying by~$2$:
\[
  2yz \;=\; z + 2x^2 - 2y - 4y^2 \;=\; z - 2y + 2x^2 - 4y^2.
  \qedhere
\]
\end{proof}

\subsection{Pure-power relations}\label{ssec:pure}

Since $\ord_t(z)=3$, we have $\ord_t(z^8)=24>21$, hence
\begin{equation}\label{eq:z8}
  z^8 = 0 \quad\text{in } R = \kk[t]/(t^{22}).
\end{equation}

For $x^8$ and $y^8$, the situation is more involved: $\ord_t(x^8)=8$ and
$\ord_t(y^8)=16$, both $\le 21$, so these are nonzero in~$R$.  Since $B$
is a basis (\Cref{thm:basis}), $x^8$ and $y^8$ each have a unique
$B$-expansion.  Clearing the denominators that arise from the rational
expressions of the basis elements gives integer-coefficient polynomial
relations $g_x = 0$ and $g_y = 0$.

The derivation method and the explicit verification that the claimed
relations hold are given in \Cref{app:purepower}.

\begin{proposition}\label{prop:pure}
The following relations hold in~$R$:
\begin{equation}\label{eq:gx}
\begin{split}
  g_x \;:=\; 32\,x^8
  &{}-1728\,x^7 - 2864\,x^6 - 11088\,x^5 - 14988\,x^4 \\
  &{}- 28080\,x^3 - 23484\,x^2 + 23484\,y \\
  &{}- 2048\,y^7 - 9728\,y^6 - 18944\,y^5 - 25888\,y^4 \\
  &{}- 20384\,y^3 - 22284\,y^2 + 2298\,z \\
  &{}+ 16\,z^7 - 8\,z^6 + 16\,z^5 - 46\,z^4 + 156\,z^3 - 581\,z^2
  \;=\; 0,
\end{split}
\end{equation}
and
\begin{equation}\label{eq:gy}
\begin{split}
  g_y \;:=\; 2048\,y^8
  &{}-112608\,x^7 - 196272\,x^6 - 723264\,x^5 \\
  &{}- 1000056\,x^4 - 1835964\,x^3 \\
  &{}- 1556406\,x^2 + 1556406\,y \\
  &{}- 116736\,y^7 - 582656\,y^6 - 1144064\,y^5 \\
  &{}- 1576192\,y^4 - 1226864\,y^3 \\
  &{}- 1395024\,y^2 + 139779\,z \\
  &{}+ 1008\,z^7 - 496\,z^6 + 984\,z^5 - 2816\,z^4 \\
  &{}+ 9522\,z^3 - 35392\,z^2
  \;=\; 0.
\end{split}
\end{equation}
\end{proposition}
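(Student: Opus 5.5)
Since each of $g_x$ and $g_y$ is a polynomial in $x,y,z$ with no mixed monomials, the cleanest route is the same device as in the proof of \Cref{thm:basis}: substitute the closed forms \eqref{eq:defx}--\eqref{eq:defz}, multiply by a common denominator, and reduce the claim to the vanishing of finitely many Taylor coefficients of an explicit polynomial in~$t$. Concretely, for $g_x$ multiply by $D_8(t)=(1-t)^8(1-2t)^7(1-3t)^7$, so that $D_8x^8=t^8(1-2t)^7(1-3t)^7$ and every other term (each $x^i,y^j,z^k$ with $i,j,k\le 7$) becomes a polynomial in $t$ of degree at most $22$. Call the resulting polynomial $P_x(t)=D_8(t)\,g_x(t)$. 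Since $D_8$ is a unit in $\kk[[t]]$, $g_x=0$ in $R$ if and only if $P_x\equiv 0\pmod{t^{22}}$, that is, the coefficients of $t^0,\dots,t^{21}$ of $P_x$ vanish. For $g_y$ use $D_8'(t)=(1-t)^8(1-2t)^8(1-3t)^7$ in the same way.

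The verification then splits into two steps. First, handle low orders structurally. The constant term is zero automatically, since no constant appears. The coefficients of $t^1$ and $t^2$ involve only the $x$-, $x^2$-, $y$- and $z$-terms, and the equal and opposite coefficients $\mp 23484$ on $x^2$ and $y$ (respectively $\mp1556406$) are visibly arranged to cancel the $t^2$ contribution. Second, the remaining coefficients of $t^3,\dots,t^{21}$ give nineteen linear identities among integers. I would verify these by direct expansion, computing $t^i(1-t)^{8-i}(1-2t)^7(1-3t)^7$ and the analogous products by binomial convolution. This is a finite exact integer computation, best done by computer algebra, with the expansions recorded in \Cref{app:purepower}.

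To explain where the coefficients come from, and as an independent check, I would also describe the derivation. By \Cref{thm:basis}, $x^8$ has a unique expansion $x^8=\sum_{b\in B}\lambda_b\,b$. The constants $\lambda_b$ are found by the pole-separation argument run constructively. Write $x^8(t)D_8(t)$ and match against the basis images: the principal parts at $t=1/3$ determine the $z^k$ coefficients, then the principal parts at $t=1/2$ determine the $y^j$, then those at $t=1$ determine the $x^i$, all modulo the degree-$21$ constraint. Equivalently, solve the $22\times 22$ linear system given by the first $22$ Taylor coefficients. Clearing the denominator (the factor $32$, respectively $2048=2^{11}$) yields $g_x$, respectively $g_y$. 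Uniqueness of the expansion then guarantees that any integer relation of this shape agreeing in all $22$ coefficients is exactly the stated one.

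The main obstacle is purely computational: the Taylor coefficients up to $t^{21}$ of degree-$22$ products with large binomial coefficients. There is no conceptual gap, but a hand proof would be error-prone. I would therefore present the check as an exact rational-arithmetic computation, which is certified because the problem reduces to finitely many integer identities. The structural steps above, namely the denominator clearing and the unit argument, are what make that finite check equivalent to the statement.
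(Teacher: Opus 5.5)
Your proposal is correct and follows essentially the paper's argument: the coefficients come from the unique $B$-expansion of $x^8$ and $y^8$ (\Cref{thm:basis}) after clearing denominators, and the relations are certified by a finite exact check that the Taylor coefficients of $t^0,\dots,t^{21}$ vanish. The only difference is cosmetic: you first multiply by $(1-t)^8(1-2t)^7(1-3t)^7$ (respectively $(1-t)^8(1-2t)^8(1-3t)^7$) to work with polynomials, whereas the paper expands the rational functions directly as truncated power series modulo $t^{22}$.
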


\subsection{The generic ideal}

Let
\begin{equation}\label{eq:idealJ}
  J \;=\; \bigl(
    xy+x^2-y,\;\;
    2xz-2x^2+2y-z,\;\;
    2yz-2x^2+2y+4y^2-z,\;
    g_x,\, g_y,\, z^8
  \bigr)
\end{equation}
in $\kk[x,y,z]$.

\begin{proposition}\label{prop:generic}
There is an isomorphism $\kk[x,y,z]/J \;\cong\; \kk[t]/(t^{22})$.
\end{proposition}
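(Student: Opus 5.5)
We consider the $\kk$-algebra map $\varphi\colon\kk[x,y,z]\to R=\kk[t]/(t^{22})$ sending $x,y,z$ to the elements defined in \eqref{eq:defx}--\eqref{eq:defz}. The proof has three steps: show that $\varphi$ is surjective, show that $J\subseteq\ker\varphi$, and then compare dimensions. The dimension comparison consists of showing $\dim_\kk \kk[x,y,z]/J\le 22$, after which the induced surjection $\kk[x,y,z]/J\to R$ is forced to be an isomorphism.

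The first two steps follow from results already stated. Surjectivity is immediate from \Cref{cor:curvilinear}, since $x$ alone generates $R$; alternatively, by \Cref{thm:basis} the image of $B$ is a basis. For $J\subseteq\ker\varphi$:
\begin{itemize}
\item the three mixed generators vanish by \Cref{prop:mixed};
\item $z^8$ vanishes by \eqref{eq:z8};
\item $g_x$ and $g_y$ vanish by \Cref{prop:pure}.
\end{itemize}
So $\varphi$ descends to a surjection $\bar\varphi\colon\kk[x,y,z]/J\to R$.

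For the upper bound, we show that the images of the 22 spider monomials in $B$ span $Q=\kk[x,y,z]/J$. Since $B$ contains $1$, it suffices to show that the span $V$ of $B$ in $Q$ is closed under multiplication by $x$, $y$ and $z$. We reduce an arbitrary monomial as follows.
\begin{enumerate}
\item Any monomial involving two distinct variables contains a factor $xy$, $xz$ or $yz$. Using the three mixed relations (dividing by $2$ is allowed in characteristic zero), we rewrite $xy$, $xz$, $yz$ as $\kk$-combinations of $x^2$, $y$, $y^2$, $z$.
\item Pure powers $x^8$, $y^8$, $z^8$ are rewritten via $g_x/32$, $g_y/2048$ and $z^8=0$ as combinations of elements of $B$.
\end{enumerate}

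The worry is that step 1 can regenerate mixed monomials. For example, $x\cdot xy$ becomes $xy - x^3$, which contains $xy$ again. So we need a termination argument. We use the $t$-adic-order-compatible weight $w(x)=1$, $w(y)=2$, $w(z)=3$. Each mixed relation replaces a monomial by terms of weight at most its own, and the terms of equal weight are pure powers or monomials with fewer mixed factors. For instance, in $x^a y\mapsto x^{a-1}y - x^{a+1}$, the $y$-degree strictly drops while the weight does not increase.

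Inducting on (total number of non-$x$ variables, then degree) with this ordering, every monomial reduces to a combination of pure powers. Pure powers of exponent at least $8$ are then reduced by $g_x$, $g_y$ and $z^8$, whose non-leading terms lie in $B$. One must check that $x^8\cdot x$ etc. also terminate. For this, note that $x^n$ for $n\ge 8$ reduces to terms of $B$, and multiplying these by $x$ again only produces $x^8$ (handled) and mixed terms $x y^j$, $x z^k$, which reduce to strictly "smaller" data. Making this induction airtight is the main obstacle.

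A cleaner alternative for the upper bound is to show directly that $V$ is stable under multiplication by $x$, $y$, $z$ by a finite check on the 22 basis elements. Products $x\cdot x^i$, $y\cdot y^j$, $z\cdot z^k$ land in $B$ or are handled by $g_x$, $g_y$, $z^8$. Products like $x\cdot y^j$ are handled by induction on $j$, using $xy^j=(y-x^2)y^{j-1}=y^j - x\cdot(xy^{j-1})$, which reduces to $x\cdot(xy^{j-1})$ with smaller $y$-exponent. The cases $x z^k$, $y z^k$ and $y x^i$ are analogous, with $yx^i=x^{i-1}(y-x^2)$ iterated.

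Once $V=Q$, we have $\dim Q\le 22=\dim R$, so the surjection $\bar\varphi$ is an isomorphism. The image of $B$ is then also a basis of $Q$, which will be needed later for flatness.
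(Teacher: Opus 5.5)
Your outline is the same as the paper's: surjectivity via the corollary, $J\subseteq\ker\varphi$, and $\dim_\kk Q\le 22$ by showing that $B$ spans $Q=\kk[x,y,z]/J$. The gap is in the spanning step, which is the only non-bookkeeping part of the proof, and it is not actually established.

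\textbf{The weight argument.} Your weight $(1,2,3)$ does strictly decrease under the three mixed rules. The pure-power rules, however, go uphill. The relation $g_x$ rewrites $x^8$ (weight $8$) in terms of $z^7$ (weight $21$) and $y^7$ (weight $14$). So the mixed monomials reintroduced when you reduce, say, $x^9=x\cdot x^8$ (which produces $x\cdot z^7$) are not controlled, as you acknowledge yourself.

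\textbf{The ``cleaner alternative''.} As written, the recurrence $xy^j=y^j-x\cdot(xy^{j-1})$ is circular. From the inductive hypothesis $xy^{j-1}\in V$ you may conclude $x\cdot(xy^{j-1})\in V$ only if you already know $xV\subseteq V$, which is exactly what you are proving. If you instead mean to keep reducing the monomial $x^2y^{j-1}$, you must show the process ends in pure powers of exponent at most $8$. Otherwise an $x^9$ sends you back to $x\cdot z^7$ and $x\cdot y^7$. The cases $xz^k$ and $yz^k$, whose rules produce $x^2$ and $y^2$, are not ``analogous'' without such a bound.

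The missing idea can be supplied in either of two ways.

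\emph{The paper's route.} Use the weight $(15,16,17)$. Under it, all six rules strictly decrease weight: the mixed ones as with your weight, and $g_x,g_y$ because $8\cdot 15=120>119=7\cdot 17$ (and $128>119$). A plain induction on weight then shows every monomial lies in the span of $B$.

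\emph{Within your closure framework.}
\begin{enumerate}
\item Each mixed rule replaces a degree-$2$ monomial by terms of degree $\le 2$, so it never raises total degree.
\item Each mixed rule also strictly lowers your $(1,2,3)$-weight, so mixed reduction alone terminates.
\item Hence, for $b\in B$ and $v\in\{x,y,z\}$, the monomial $vb$ of degree $\le 8$ reduces by mixed rules alone to a combination of $x^i,y^j,z^k$ with $i,j,k\le 8$.
\item A single application of $g_x/32$, $g_y/2048$ and $z^8=0$, whose tails lie in $B$, then places it in $V$.
\end{enumerate}
This gives $xV,yV,zV\subseteq V$, hence $V=Q$, and the rest of your argument goes through.
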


\begin{proof}
The substitution \eqref{eq:defx}--\eqref{eq:defz} defines a $\kk$-algebra
map $\varphi\colon\kk[x,y,z]\to R$ which is surjective by
\Cref{cor:curvilinear} (since $x$ alone generates~$R$).  By
\Cref{prop:mixed}, \Cref{prop:pure}, and~\eqref{eq:z8}, all generators
of~$J$ lie in $\ker\varphi$.  Hence $\varphi$ descends to a surjection
$\bar\varphi\colon\kk[x,y,z]/J\twoheadrightarrow R$.

It remains to show $\dim_\kk(\kk[x,y,z]/J)\le 22$.

\medskip\noindent\textbf{Step~1: Reduction of mixed monomials.}\enspace
From $xy = y-x^2$ (relation~\eqref{eq:rel-xy}), every occurrence of the
product $xy$ in a monomial can be replaced by $y-x^2$, a sum of pure powers.

From $2xz = z-2y+2x^2$ (relation~\eqref{eq:rel-xz}), every occurrence of
$xz$ is replaced by $\frac{1}{2}(z-2y+2x^2)$, again a sum of pure powers.
(In characteristic zero, dividing by~$2$ is valid.)

From $2yz = z-2y+2x^2-4y^2$ (relation~\eqref{eq:rel-yz}), every occurrence
of $yz$ is replaced by $\frac{1}{2}(z-2y+2x^2-4y^2)$, a sum of pure powers.

By iterating these reductions, every
monomial in $\kk[x,y,z]$ eventually reduces to a $\kk$-linear combination
of pure powers $\{x^i : i\ge 0\}\cup\{y^j : j\ge 1\}\cup\{z^k : k\ge 1\}$.
To see termination, assign weights $w(x)=15$, $w(y)=16$, $w(z)=17$
(extended multiplicatively to monomials).  In each rewrite rule, every
output monomial has strictly smaller weight than the input:
$w(xy)=31>30=w(x^2)$ for~\eqref{eq:rel-xy};
$w(xz)=32>30=w(x^2)$ for~\eqref{eq:rel-xz};
$w(yz)=33>32=w(y^2)$ for~\eqref{eq:rel-yz}
(listing only the heaviest output term in each case).
Multiplying by a common monomial factor preserves strict inequality,
so each application strictly decreases the weight.  Since weights
are non-negative integers, the process terminates.

\medskip\noindent\textbf{Step~2: Reduction of high pure powers.}\enspace
The relation $g_x=0$ expresses $32\,x^8$ as a linear combination of elements
of~$B$ (the leading coefficient $32$ is invertible in characteristic zero).
Similarly, $g_y=0$ expresses $2048\,y^8$ in terms of~$B$, and $z^8=0$
eliminates $z^8$ directly.  By iterating (applying $xy=y-x^2$ etc.\ to
reduce any mixed terms that arise from the substitutions, with termination
guaranteed by the weight order of Step~1), every pure power
of degree $\ge 8$ reduces to a combination of elements of~$B$.

\medskip\noindent\textbf{Conclusion.}\enspace
Steps~1 and~2 show that $B$ spans $\kk[x,y,z]/J$, giving
$\dim_\kk\bigl(\kk[x,y,z]/J\bigr)\le 22$.
Since $\bar\varphi$ is a surjection onto the $22$-dimensional
algebra~$R$, we conclude
\[
  \kk[x,y,z]/J\;\cong\; R\;\cong\;\kk[t]/(t^{22}).
\]
\end{proof}

%% ══════════════════════════════════════════════════════════════════
\section{The weighted Rees family}\label{sec:rees}
%% ══════════════════════════════════════════════════════════════════

\subsection{Weight selection}

We assign weights
\begin{equation}\label{eq:weights}
  w(x)=15, \qquad w(y)=16, \qquad w(z)=17.
\end{equation}
The key property is that in each generator of~$J$, the \emph{border monomial}
(the monomial that becomes the leading term of the spider ideal at $\eps=0$)
is strictly heavier than every \emph{tail monomial}.

\begin{lemma}\label{lem:weights}
With the weights~\eqref{eq:weights}, the following strict inequalities hold
for each generator of\/~$J$:
\begin{enumerate}[label=\textup{(\roman*)}]
\item In $xy + x^2 - y$: the border is $xy$ with $w(xy) = 31$, and
  the tails have $w(x^2)=30$ and $w(y)=16$, so $31 > 30$.
\item In $2xz - 2x^2 + 2y - z$: the border is $xz$ with $w(xz)=32$,
  and the heaviest tail has $w(x^2)=30$, so $32>30$.
\item In $2yz - 2x^2 + 2y + 4y^2 - z$: the border is $yz$ with
  $w(yz)=33$, and the heaviest tail has $w(y^2)=32$, so $33>32$.
\item In $g_x$: the border is $x^8$ with $w(x^8)=120$.  The tails are
  elements of $B\setminus\{1\}$; the heaviest is $z^7$ with
  $w(z^7)=7\cdot 17=119$, so $120>119$.
\item In $g_y$: the border is $y^8$ with $w(y^8)=128$, and again
  $w(z^7)=119$, so $128>119$.
\item In $z^8$: the border is $z^8$ with $w(z^8)=136$, and there are no
  tail terms.
\end{enumerate}
\end{lemma}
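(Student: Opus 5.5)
This is a finite verification, and I would check it generator by generator. The weights are $w(x)=15$, $w(y)=16$, $w(z)=17$, extended additively to monomials. For each of the six generators of $J$ in \eqref{eq:idealJ}, I would list every monomial with nonzero coefficient and compute its weight. Then I would confirm that the designated border monomial is the unique monomial of maximal weight.

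The three mixed relations are immediate.

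\begin{itemize}
\item[(i)] In $xy+x^2-y$ the weights are $31,30,16$.
\item[(ii)] In $2xz-2x^2+2y-z$ the weights are $32,30,16,17$.
\item[(iii)] In $2yz-2x^2+2y+4y^2-z$ the weights are $33,30,16,32,17$.
\end{itemize}

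In each case the border exceeds the largest tail weight, which is $30$, $30$ and $32$ respectively. For (vi), $z^8$ is a monomial, so there is nothing to compare.

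For (iv) and (v) I would avoid reading off each coefficient of $g_x$ and $g_y$ individually. Instead I would use the structural fact from \Cref{prop:pure}: apart from the border monomial $x^8$ (resp.\ $y^8$), every monomial occurring in $g_x$ (resp.\ $g_y$) lies in $B\setminus\{1\}$.

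This holds because the $B$-expansion of $x^8$ has no constant term. Indeed $\ord_t(x^8)=8\ge 1$, so evaluating at $t=0$ as in Step~0 of the proof of \Cref{thm:basis} kills the coefficient of $1$. The same argument applies to $y^8$. One can also see it directly, since no constant monomial appears in the displayed formulas \eqref{eq:gx}, \eqref{eq:gy}.

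It then suffices to bound $\max_{b\in B\setminus\{1\}} w(b)$. Over $B\setminus\{1\}$ we have $w(x^i)=15i\le105$, $w(y^j)=16j\le112$ and $w(z^k)=17k\le119$. So the maximum is $w(z^7)=119$. This is strictly less than $w(x^8)=120$ and $w(y^8)=128$, which gives (iv) and (v).

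The only point needing care is that the monomial supports of $g_x$ and $g_y$ are really contained in $\{x^8\}\cup B$ (resp.\ $\{y^8\}\cup B$), with no stray mixed monomials. I would confirm this by inspecting \eqref{eq:gx} and \eqref{eq:gy}, which involve only pure powers of exponent at most $7$ besides the border.

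The tightest case is (iv), where $120$ versus $119$ is a margin of one. This is where the specific choice of weights $15,16,17$ is essential, and I would note it explicitly.
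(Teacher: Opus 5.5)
Your proposal is correct and is essentially the paper's own proof. Both do a direct generator-by-generator weight computation, and both settle (iv) and (v) by noting that every tail of $g_x$ and $g_y$ lies in $B\setminus\{1\}$, whose heaviest element is $z^7$ with weight $119<120\le 128$. Your extra argument that no constant term can occur (via $\ord_t(x^8),\ord_t(y^8)\ge 1$) is a harmless refinement of what the paper simply reads off from the displayed formulas for $g_x$ and $g_y$.
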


\begin{proof}
Direct computation from~\eqref{eq:weights}.  We check each case
individually.

\emph{Case (i):} $w(xy)=w(x)+w(y)=15+16=31$.  The tails are
$x^2$ and $y$: $w(x^2)=30$, $w(y)=16$.  Both $<31$.

\emph{Case (ii):} $w(xz)=15+17=32$.  The tails are $x^2$, $y$, $z$:
$w(x^2)=30$, $w(y)=16$, $w(z)=17$.  All $<32$.

\emph{Case (iii):} $w(yz)=16+17=33$.  The tails are $x^2$, $y$, $y^2$, $z$:
$w(y^2)=32$, $w(x^2)=30$, $w(y)=16$, $w(z)=17$.  All $<33$.

\emph{Case (iv):} $w(x^8)=120$.  The tail monomials in $g_x$ are all in
$B\setminus\{1,x^8\}$.  Their weights range from $w(y)=16$ up to
$w(z^7)=119$.  Since $120>119$, the inequality holds.

\emph{Case (v):} $w(y^8)=128$.  The heaviest tail is again $z^7$ at
$w=119$.  Since $128>119$, the inequality holds.

\emph{Case (vi):} $z^8$ has no tails (it equals zero in~$R$), so the
condition is vacuously satisfied.
\end{proof}

\begin{remark}\label{rmk:tightness}
The margin in~(iv) is exactly~$1$.  The weights $(15,16,17)$ are
the lexicographically smallest of the form $(w,w+1,w+2)$ satisfying all
six strict inequalities.  The binding constraint is $8w > 7(w+2)$, i.e.\
$w>14$.
\end{remark}

\subsection{Homogenisation}\label{ssec:homog}

Applying \Cref{def:rees} to each generator of~$J$ with the
weights~\eqref{eq:weights} yields the family ideal
$I\subset\kk[x,y,z,\eps]$ generated by:
\begin{align}
  f_1 &\;=\; xy + \eps\,x^2 - \eps^{15}\,y,
    \label{eq:f1}\\
  f_2 &\;=\; 2xz - 2\eps^2\,x^2 + 2\eps^{16}\,y - \eps^{15}\,z,
    \label{eq:f2}\\
  f_3 &\;=\; 2yz - 2\eps^3\,x^2 + 4\eps\,y^2
               + 2\eps^{17}\,y - \eps^{16}\,z,
    \label{eq:f3}
\end{align}
and $f_4=g_x^w$, $f_5=g_y^w$, $f_6=z^8$.

We verify the $\eps$-exponents explicitly in each case.

\medskip\noindent\textbf{Exponents in $f_1$.}\enspace
The generic relation is $xy + x^2 - y = 0$, with border weight
$w_{\max}=w(xy)=31$.  The exponents are:
\[
  \eps^{31-31}=\eps^0 \text{ on } xy,\qquad
  \eps^{31-30}=\eps^1 \text{ on } x^2,\qquad
  \eps^{31-16}=\eps^{15} \text{ on } y.
\]

\medskip\noindent\textbf{Exponents in $f_2$.}\enspace
The generic relation is $2xz - 2x^2 + 2y - z = 0$, with
$w_{\max}=w(xz)=32$.  The exponents are:
\begin{gather*}
  \eps^{32-32}=\eps^0 \text{ on } xz,\quad
  \eps^{32-30}=\eps^2 \text{ on } x^2,\\
  \eps^{32-16}=\eps^{16} \text{ on } y,\quad
  \eps^{32-17}=\eps^{15} \text{ on } z.
\end{gather*}

\medskip\noindent\textbf{Exponents in $f_3$.}\enspace
The generic relation is $2yz - 2x^2 + 2y + 4y^2 - z = 0$, with
$w_{\max}=w(yz)=33$.  The exponents are:
\begin{gather*}
  \eps^{33-33}=\eps^0 \text{ on } yz,\quad
  \eps^{33-30}=\eps^3 \text{ on } x^2,\quad
  \eps^{33-32}=\eps^1 \text{ on } y^2,\\
  \eps^{33-16}=\eps^{17} \text{ on } y,\quad
  \eps^{33-17}=\eps^{16} \text{ on } z.
\end{gather*}

\medskip\noindent\textbf{Exponents in $f_4$ and $f_5$.}\enspace
For $f_4=g_x^w$, the border weight is $w(x^8)=120$.  Each tail monomial $m$
in~$g_x$ receives factor $\eps^{120-w(m)}$.  For example, the term
$-2048\,y^7$ has $w(y^7)=112$, so it appears as $-2048\,\eps^{8}\,y^7$.
The term $16\,z^7$ has $w(z^7)=119$, so it appears as $16\,\eps^{1}\,z^7$.
The full expressions are:
\begin{align}
f_4 \;=\;{} &32\,x^8
  + 16\,\eps\,z^7
  - 2048\,\eps^{8}\,y^7
  - 1728\,\eps^{15}\,x^7
  - 8\,\eps^{18}\,z^6 \notag\\
  &- 9728\,\eps^{24}\,y^6
  - 2864\,\eps^{30}\,x^6
  + 16\,\eps^{35}\,z^5
  - 18944\,\eps^{40}\,y^5 \notag\\
  &- 11088\,\eps^{45}\,x^5
  - 46\,\eps^{52}\,z^4
  - 25888\,\eps^{56}\,y^4
  - 14988\,\eps^{60}\,x^4 \notag\\
  &+ 156\,\eps^{69}\,z^3
  - 20384\,\eps^{72}\,y^3
  - 28080\,\eps^{75}\,x^3
  - 581\,\eps^{86}\,z^2 \notag\\
  &- 22284\,\eps^{88}\,y^2
  - 23484\,\eps^{90}\,x^2
  + 2298\,\eps^{103}\,z
  + 23484\,\eps^{104}\,y,
    \label{eq:f4full}
\end{align}
\begin{align}
f_5 \;=\;{} &2048\,y^8
  + 1008\,\eps^{9}\,z^7
  - 116736\,\eps^{16}\,y^7
  - 112608\,\eps^{23}\,x^7 \notag\\
  &- 496\,\eps^{26}\,z^6
  - 582656\,\eps^{32}\,y^6
  - 196272\,\eps^{38}\,x^6
  + 984\,\eps^{43}\,z^5 \notag\\
  &- 1144064\,\eps^{48}\,y^5
  - 723264\,\eps^{53}\,x^5
  - 2816\,\eps^{60}\,z^4
  - 1576192\,\eps^{64}\,y^4 \notag\\
  &- 1000056\,\eps^{68}\,x^4
  + 9522\,\eps^{77}\,z^3
  - 1226864\,\eps^{80}\,y^3
  - 1835964\,\eps^{83}\,x^3 \notag\\
  &- 35392\,\eps^{94}\,z^2
  - 1395024\,\eps^{96}\,y^2
  - 1556406\,\eps^{98}\,x^2 \notag\\
  &+ 139779\,\eps^{111}\,z
  + 1556406\,\eps^{112}\,y.
    \label{eq:f5full}
\end{align}
For $f_5=g_y^w$, the border weight is $w(y^8)=128$, and the exponents are
computed analogously (e.g., $1008\,z^7$ has $w(z^7)=119$, giving
$\eps^{128-119}=\eps^9$).

\subsection{Fibres}

\begin{proposition}[Special fibre]\label{prop:special}
At $\eps=0$, the ideal $I$ specialises to
$(xy,\,xz,\,yz,\,x^8,\,y^8,\,z^8)$,
so $\mathcal{A}/\eps\mathcal{A}\cong A$.
\end{proposition}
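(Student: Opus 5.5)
The plan is to set $\eps=0$ in each of the six generators $f_1,\dots,f_6$ and identify the surviving terms. Since $\mathcal{A}/\eps\mathcal{A}\cong\kk[x,y,z]/(I+(\eps))$ and $I+(\eps)=(f_1|_{\eps=0},\dots,f_6|_{\eps=0},\eps)$, we get $\mathcal{A}/\eps\mathcal{A}\cong\kk[x,y,z]/(f_1|_{\eps=0},\dots,f_6|_{\eps=0})$. So it suffices to compute these six specialisations.

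By construction (\Cref{def:rees}), each $f_i$ is the weighted homogenisation of a generator of $J$. Every tail monomial $m$ carries the factor $\eps^{w_{\max}-w(m)}$. \Cref{lem:weights} shows that each border monomial is strictly heavier than all tails of its generator, so every tail exponent is at least $1$ and each tail vanishes at $\eps=0$. The exponents can also be read off directly from \eqref{eq:f1}--\eqref{eq:f3}, \eqref{eq:f4full} and \eqref{eq:f5full}: the smallest tail exponents are $1$ in $f_1$, $2$ in $f_2$, $1$ in $f_3$ (from $4\eps y^2$), $1$ in $f_4$ (from $16\eps z^7$) and $9$ in $f_5$.

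The specialisations are therefore
\[
  xy,\quad 2xz,\quad 2yz,\quad 32x^8,\quad 2048y^8,\quad z^8.
\]
Since $\kk$ has characteristic zero, the constants $2$, $32$ and $2048$ are units. The ideal generated is thus $(xy,xz,yz,x^8,y^8,z^8)$, which is exactly the spider ideal of \Cref{def:spider} with $\ell_1=\ell_2=\ell_3=7$. This gives $\mathcal{A}/\eps\mathcal{A}\cong A$.

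No step is a genuine obstacle; this is bookkeeping. The only places needing care are checking that every tail exponent in $f_4$ and $f_5$ is positive, which is exactly \Cref{lem:weights}(iv)--(v) with margins $1$ and $9$, and noting that the integer leading coefficients are invertible in $\kk$. Flatness is not part of this statement and is left to \Cref{sec:flat}.
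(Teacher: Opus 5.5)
Your proposal is correct and is essentially the paper's own proof: you specialise each of $f_1,\dots,f_6$ at $\eps=0$, note that every tail monomial carries a strictly positive power of $\eps$ (by \Cref{lem:weights}; your minimal exponents $1,2,1,1,9$ are right), and use that $2$, $32$, $2048$ are units in characteristic zero. Your preliminary observation that $I+(\eps)$ is generated by $\eps$ and the specialised $f_i$ makes explicit a point the paper leaves implicit, namely that no Gr\"obner-basis property is needed to identify the special fibre.
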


\begin{proof}
Setting $\eps=0$ in \eqref{eq:f1}--\eqref{eq:f3} and
\eqref{eq:f4full}--\eqref{eq:f5full}, every tail monomial vanishes (it
carries a positive power of~$\eps$), leaving
\begin{gather*}
  f_1\big|_{\eps=0}=xy,\quad
  f_2\big|_{\eps=0}=2xz,\quad
  f_3\big|_{\eps=0}=2yz,\\
  f_4\big|_{\eps=0}=32\,x^8,\quad
  f_5\big|_{\eps=0}=2048\,y^8,\quad
  f_6\big|_{\eps=0}=z^8.
\end{gather*}
In characteristic zero, the scalar prefactors $2$, $32=2^5$, and $2048=2^{11}$
are units, so the specialised ideal is $(xy,xz,yz,x^8,y^8,z^8)$.
\end{proof}

\begin{proposition}[Generic fibre]\label{prop:generic-fibre}
For any $\lambda\neq 0$, the fibre at $\eps=\lambda$ is isomorphic to
$\kk[t]/(t^{22})$.
\end{proposition}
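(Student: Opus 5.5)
The plan is to use the weighted rescaling of \Cref{def:rees}(b) to identify the fibre at $\eps=\lambda$ with $\kk[x,y,z]/J$, and then invoke \Cref{prop:generic}. Fix $\lambda\in\kk$, $\lambda\neq 0$. Consider the $\kk$-algebra automorphism $\sigma_\lambda$ of $\kk[x,y,z]$ given by
\[
  x\mapsto \lambda^{15}X,\qquad y\mapsto\lambda^{16}Y,\qquad z\mapsto\lambda^{17}Z,
\]
which is invertible because $\lambda$ is a unit. The fibre at $\eps=\lambda$ is $\kk[x,y,z]/I_\lambda$, where $I_\lambda$ is generated by $f_i(x,y,z,\lambda)$ for $i=1,\ldots,6$. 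We aim to show $\sigma_\lambda(I_\lambda)=J$ in the variables $X,Y,Z$.

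The key computation is generator by generator. Each $f_i$ is the Rees homogenisation of a generator $h_i$ of $J$, so every term $c_\alpha\eps^{w_{\max}-w(\alpha)}x^\alpha$ becomes $c_\alpha\lambda^{w_{\max}-w(\alpha)}\lambda^{w(\alpha)}X^\alpha=\lambda^{w_{\max}(h_i)}c_\alpha X^\alpha$ after applying $\sigma_\lambda$. Hence $\sigma_\lambda(f_i|_{\eps=\lambda})=\lambda^{w_{\max}(h_i)}h_i(X,Y,Z)$. For instance, $f_1$ maps to $\lambda^{31}(XY+X^2-Y)$, and $f_6=z^8$ maps to $\lambda^{136}Z^8$. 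I would record the six scalars $31,32,33,120,128,136$, all of which are units. It follows that the ideals coincide, so $\sigma_\lambda$ induces an isomorphism $\kk[x,y,z]/I_\lambda\cong\kk[X,Y,Z]/J$. By \Cref{prop:generic}, the latter is isomorphic to $\kk[t]/(t^{22})$.

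The only point needing care is that every exponent in $f_1,\ldots,f_6$ really equals $w_{\max}(h_i)-w(\alpha)$, with $w_{\max}$ attained at the border monomial. \Cref{lem:weights} guarantees this, and the exponents listed in \S\ref{ssec:homog} follow that rule. I would therefore spot-check the explicit formulas \eqref{eq:f4full} and \eqref{eq:f5full}, e.g. $\eps^{104}y$ in $f_4$ with $120-16=104$, rather than re-derive them. I expect no genuine obstacle. The step most prone to error is confirming that the fibre ideal is literally generated by the specialised $f_i$, which holds because specialisation $\kk[x,y,z,\eps]\to\kk[x,y,z]$, $\eps\mapsto\lambda$, sends $I$ onto the ideal generated by the images of its generators. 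This argument is independent of flatness, which is treated in \Cref{sec:flat}.
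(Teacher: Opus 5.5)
Your proposal is correct and follows the paper's proof: the rescaling $x=\lambda^{15}X$, $y=\lambda^{16}Y$, $z=\lambda^{17}Z$ turns each $f_i|_{\eps=\lambda}$ into $\lambda^{w_{\max}}$ times the corresponding generator of $J$, and \Cref{prop:generic} then finishes the argument. Your uniform identity $\sigma_\lambda(f_i|_{\eps=\lambda})=\lambda^{w_{\max}(h_i)}h_i$ handles $f_4,f_5,f_6$ more cleanly than the paper, which checks $f_1,f_2,f_3$ explicitly and says ``same pattern'' for the rest; note also that uniqueness of the border monomial from \Cref{lem:weights} is not needed for this direction (it matters only for the special fibre).
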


\begin{proof}
We verify that the substitution $x=\lambda^{15}X$, $y=\lambda^{16}Y$,
$z=\lambda^{17}Z$ sends each $f_i|_{\eps=\lambda}$ to a scalar multiple
of the corresponding generator of~$J$ in the variables $X,Y,Z$.

For $f_1$: substituting gives
\begin{align*}
  f_1\big|_{\eps=\lambda}
  &= \lambda^{15}\lambda^{16}\,XY
     + \lambda\cdot\lambda^{30}\,X^2
     - \lambda^{15}\cdot\lambda^{16}\,Y \\
  &= \lambda^{31}(XY + X^2 - Y).
\end{align*}
Since $\lambda^{31}\neq 0$, this generates the same ideal as $XY+X^2-Y$.

For $f_2$: substituting $x=\lambda^{15}X$, $y=\lambda^{16}Y$,
$z=\lambda^{17}Z$, $\eps=\lambda$:
\begin{align*}
  f_2\big|_{\eps=\lambda}
  &= 2\lambda^{15}\lambda^{17}\,XZ
     - 2\lambda^2\cdot\lambda^{30}\,X^2
     + 2\lambda^{16}\cdot\lambda^{16}\,Y
     - \lambda^{15}\cdot\lambda^{17}\,Z \\
  &= \lambda^{32}(2XZ - 2X^2 + 2Y - Z).
\end{align*}

For $f_3$: similarly,
\begin{align*}
  f_3\big|_{\eps=\lambda}
  &= 2\lambda^{16}\lambda^{17}\,YZ
     - 2\lambda^3\cdot\lambda^{30}\,X^2
     + 4\lambda\cdot\lambda^{32}\,Y^2 \\
  &\quad + 2\lambda^{17}\cdot\lambda^{16}\,Y
     - \lambda^{16}\cdot\lambda^{17}\,Z \\
  &= \lambda^{33}(2YZ - 2X^2 + 4Y^2 + 2Y - Z).
\end{align*}

The same pattern holds for $f_4$ (common factor $\lambda^{120}$), $f_5$
(common factor $\lambda^{128}$), and $f_6$ (common factor $\lambda^{136}$).
In each case, the common factor is $\lambda^{w_{\max}}\neq 0$.

Hence every fibre at $\eps=\lambda\neq 0$ is isomorphic to $\kk[X,Y,Z]/J\cong
\kk[t]/(t^{22})$ by \Cref{prop:generic}, establishing \Cref{thm:main}\,(ii).
\end{proof}

%% ══════════════════════════════════════════════════════════════════
\section{Flatness}\label{sec:flat}
%% ══════════════════════════════════════════════════════════════════

\begin{theorem}\label{thm:flat}
The family $\mathcal{A}=\kk[x,y,z,\eps]/I$ is flat over $\kk[\eps]$.
\end{theorem}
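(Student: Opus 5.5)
The plan is to use the standard criterion that a finitely generated module over the principal ideal domain $\kk[\eps]$ is flat exactly when it is torsion-free. I will in fact show more: $\mathcal{A}$ is a free $\kk[\eps]$-module with basis the $22$ spider monomials $B$ of \eqref{eq:basisB}. The argument has two halves. First, $B$ spans $\mathcal{A}$ over $\kk[\eps]$. Second, the generic rank is $22$, and this forces the spanning set to be free.

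\textbf{Step 1: $B$ spans $\mathcal{A}$ over $\kk[\eps]$.} I repeat the reduction of \Cref{prop:generic}, but now with coefficients in $\kk[\eps]$. The generators \eqref{eq:f1}--\eqref{eq:f3} give rewrite rules
\[
xy\mapsto -\eps x^2+\eps^{15}y,\qquad xz\mapsto \eps^2x^2-\eps^{16}y+\tfrac12\eps^{15}z,\qquad yz\mapsto \eps^3x^2-2\eps y^2-\eps^{17}y+\tfrac12\eps^{16}z .
\]
The generators $f_4,f_5,f_6$ rewrite $x^8$, $y^8$ and $z^8$ as $\kk[\eps]$-combinations of elements of $B\setminus\{1\}$, or as $0$ in the case of $z^8$. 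Every leading coefficient ($1$, $2$, $32$, $2048$) is a unit in $\kk\subset\kk[\eps]$, so each rule is legitimate over $\kk[\eps]$ and no division by $\eps$ ever occurs.

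By \Cref{lem:weights}, each rule strictly lowers the weight with respect to $w=(15,16,17)$. This persists after multiplying by any monomial in $x,y,z$, and the $\eps$-factors play no role in the weight. So every monomial in $x,y,z$ reduces, in finitely many steps, to a $\kk[\eps]$-linear combination of elements of $B$. Hence there is a surjection of $\kk[\eps]$-modules $\pi\colon\kk[\eps]^{22}\twoheadrightarrow\mathcal{A}$ sending the standard basis to $B$.

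\textbf{Step 2: the generic rank is $22$.} Over $\kk[\eps,\eps^{-1}]$, consider the automorphism $x\mapsto\eps^{15}X$, $y\mapsto\eps^{16}Y$, $z\mapsto\eps^{17}Z$. The computation in \Cref{prop:generic-fibre}, done with $\eps$ kept as a variable instead of specialised to $\lambda$, shows that this automorphism carries $I[\eps^{-1}]$ onto $J\cdot\kk[X,Y,Z,\eps^{\pm1}]$. Therefore
\[
\mathcal{A}[\eps^{-1}]\cong(\kk[X,Y,Z]/J)\otimes_\kk\kk[\eps^{\pm1}].
\]
By \Cref{prop:generic}, this is free of rank $22$. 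Tensoring further with $\kk(\eps)$ shows that $\mathcal{A}\otimes\kk(\eps)$ is $22$-dimensional over $\kk(\eps)$.

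\textbf{Step 3: conclusion.} Consider $\pi\otimes\kk(\eps)$. It is a surjection between $22$-dimensional $\kk(\eps)$-vector spaces, hence an isomorphism. So $K=\ker\pi$ satisfies $K\otimes\kk(\eps)=0$, which means $K$ is torsion. But $K$ sits inside the free module $\kk[\eps]^{22}$, which is torsion-free, so $K=0$. Thus $\mathcal{A}\cong\kk[\eps]^{22}$ is free, hence flat. As a bonus, this also recovers \Cref{prop:special} at the level of dimensions: the special fibre has dimension exactly $22$.

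I expect the main obstacle to be Step 1. The point is to check carefully that the reduction of \Cref{prop:generic} genuinely lifts to $\kk[\eps]$. This requires that every tail term in $f_1,\dots,f_6$ carries a nonnegative power of $\eps$ (which holds by construction of the Rees homogenisation in \Cref{def:rees}) and that the weight order still controls termination when mixed monomials reappear after the substitutions for $x^8$ and $y^8$. Both facts follow from \Cref{lem:weights}. The rest is formal, so the tightness margin of \Cref{rmk:tightness} is precisely what the proof relies on.
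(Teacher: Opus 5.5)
Your proof is correct. Its first two steps coincide with the paper's:
\begin{itemize}
\item $B$ spans $\mathcal{A}$ over $\kk[\eps]$, via the weight-decreasing rewrite rules;
\item the generic rank is $22$, via the rescaling $x=\eps^{15}X$, $y=\eps^{16}Y$, $z=\eps^{17}Z$ over $\kk[\eps,\eps^{-1}]$.
\end{itemize}

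The difference is in how you conclude. The paper localises at $(\eps)$ and applies the structure theorem to write $M_{(\eps)}\cong R_0^{r}\oplus\bigoplus_i R_0/(\eps^{n_i})$. It then uses the separately computed special fibre (\Cref{prop:special}, $\dim_\kk A=22$) to get $r+s=22$ and hence $s=0$. Flatness at the remaining primes of $\kk[\eps]$ comes from the generic computation.

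You instead note that the kernel of $\kk[\eps]^{22}\twoheadrightarrow\mathcal{A}$ vanishes after tensoring with $\kk(\eps)$. It is therefore a torsion submodule of a free module, so it is zero.

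Your version has several advantages. It is shorter and needs neither localisation nor the structure theorem. It gives global freeness with the explicit basis $B$. It also makes the special-fibre dimension a consequence rather than an input.

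The paper's version is the more robust \emph{equal fibre dimensions} criterion. It only needs $\mathcal{A}$ to be finitely generated, not spanned by exactly as many elements as the generic rank. Here, however, Step~1 already gives exactly $22$ generators, so $r+s\le 22$ holds without \Cref{prop:special}. The paper's Step~3 is therefore redundant, and that is precisely the fact your argument uses.
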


\begin{proof}
Let $M=\mathcal{A}$ viewed as a $\kk[\eps]$-module.  We verify the three
ingredients of a standard flatness criterion over a principal ideal domain.

\medskip\noindent\textbf{Step 1: $M$ is finitely generated over
$\kk[\eps]$.}\enspace
The same reduction rules used in the proof of \Cref{prop:generic}
(Steps~1 and~2) apply over the base ring $\kk[\eps]$ rather than~$\kk$:
the three mixed relations eliminate mixed monomials by expressing them as
$\kk[\eps]$-linear combinations of pure powers, and the pure-power
relations reduce degrees $\ge 8$.  The key point is that the leading
coefficients ($1$ in $f_1$, $2$ in $f_2$ and $f_3$, $32$ in $f_4$,
$2048$ in $f_5$, $1$ in $f_6$) are nonzero integers, hence invertible
over~$\kk$ (characteristic zero), so the reductions are valid over
$\kk[\eps]$.  Therefore $B$ spans $M$ as a $\kk[\eps]$-module.
(Equivalently, the six generators form a Gr\"obner basis with respect
to the weighted degree-reverse-lexicographic ordering with weights
$(15,16,17)$, as can be confirmed by checking that all fifteen
S-polynomials reduce to zero; this directly implies that the quotient
is a free $\kk[\eps]$-module of rank~$22$.)

\medskip\noindent\textbf{Step 2: Generic rank equals~22.}\enspace
\Cref{prop:generic-fibre} shows that every closed fibre at $\eps=\lambda\neq 0$
has $\kk$-dimension~$22$.  The same scaling argument works over
$\kk[\eps,\eps^{-1}]$ (substituting $x=\eps^{15}X$, $y=\eps^{16}Y$,
$z=\eps^{17}Z$), giving
\[
  M[\eps^{-1}] \;\cong\; \kk[t]/(t^{22})\otimes_\kk\kk[\eps,\eps^{-1}],
\]
which is free of rank~$22$ over $\kk[\eps,\eps^{-1}]$.

\medskip\noindent\textbf{Step 3: Special fibre has dimension~22.}\enspace
\Cref{prop:special} gives
$M/\eps M\cong A$, with $\dim_\kk A = 22$.

\medskip\noindent\textbf{Conclusion.}\enspace
Localise at the prime $(\eps)\subset\kk[\eps]$.  The local ring
$R_0=\kk[\eps]_{(\eps)}$ is a discrete valuation ring with uniformiser~$\eps$.
The module $M_{(\eps)}$ is finitely generated over~$R_0$ (by Step~1).
By the structure theorem for finitely generated modules over a PID,
\[
  M_{(\eps)} \;\cong\; R_0^{\,r} \;\oplus\;
    \bigoplus_{i=1}^{s} R_0/(\eps^{n_i}).
\]
From Step~2, the generic rank (i.e.\ $\dim_{\kk(\eps)}M\otimes\kk(\eps)$)
is $r=22$.  From Step~3, the dimension of the special fibre is
\[
  \dim_\kk\bigl(M_{(\eps)}/\eps\,M_{(\eps)}\bigr)
  \;=\; r + s
  \;=\; 22.
\]
Since $r=22$, we get $s=0$: there are no torsion summands.  Therefore
$M_{(\eps)}\cong R_0^{22}$, so $M$ is flat at~$(\eps)$.

For any prime $\mathfrak{p}\neq(\eps)$ of $\kk[\eps]$, the element
$\eps$ is a unit in $\kk[\eps]_\mathfrak{p}$, so flatness at~$\mathfrak{p}$
follows from Step~2.  Hence $M$ is flat over all of~$\kk[\eps]$.
\end{proof}

\Cref{sec:comp} below provides independent computational verification of the
relations and fibre dimensions.

%% ══════════════════════════════════════════════════════════════════
\section{Supplementary computational verification}\label{sec:comp}
%% ══════════════════════════════════════════════════════════════════

\begin{remark}\label{rmk:comp-not-needed}
The proof of \Cref{thm:main} is logically complete without this section.
The computations below serve as independent verification and as a practical
check on the correctness of the large integer coefficients in $g_x$
and~$g_y$.
\end{remark}

We have verified by symbolic Gr\"obner basis computation (using both
\textsc{Macaulay2} and \textsc{SymPy}) that the quotient
$\kk[x,y,z]/(f_1,\ldots,f_6)|_{\eps=\lambda}$ has exactly $22$ standard
monomials for each of
\[
  \lambda \;\in\; \{0,\; 1,\; 2,\; -1,\; \tfrac{1}{2},\; \tfrac{1}{3}\}.
\]
At $\lambda=1$, a lexicographic Gr\"obner basis reduces to the form
\[
  x^{22},\qquad
  y - \sum_{i=2}^{21} \alpha_i\,x^i,\qquad
  z - \sum_{i=3}^{21} \beta_i\, x^i
\]
for explicit integers $\alpha_i,\beta_i$, confirming that the fibre is
literally $\kk[x]/(x^{22})$: the variables $y$ and~$z$ are polynomials
in~$x$.

%% ══════════════════════════════════════════════════════════════════
\section{Concluding remarks}\label{sec:remarks}
%% ══════════════════════════════════════════════════════════════════

\subsection{The role of M\"obius generators}

The key idea is that the M\"obius functions $u_a=t/(1-at)$ provide a natural
family of elements in a curvilinear ring with well-separated pole structure.
The identity $u_b-u_a=(b-a)u_au_b$ (\Cref{lem:mobius}) gives clean
multiplication rules, and the divided-difference coordinates extract
generators of prescribed $t$-adic order.

\subsection{Generalisation to other spiders}

The construction generalises in a straightforward manner.  For a spider
algebra of type $(\ell_1,\ldots,\ell_r)$ with $n=1+\sum\ell_i$, one takes
$r$~M\"obius generators $u_1,\ldots,u_r$ and the divided-difference
coordinates of orders $1,2,\ldots,r$.  The basis theorem extends by the same
pole-separation argument (using poles at $t=1/a$ for $a=1,\ldots,r$).  The
weighted Rees degeneration requires weights $w_i$ satisfying
$(\ell_i+1)\,w_i > \ell_r\,w_r$ for all~$i$; choosing $w_i$ of the form
$w_1,w_1+1,\ldots,w_1+r-1$ with $w_1$ sufficiently large always works.

The difficulty in practice lies in computing the explicit pure-power relations
(the analogues of $g_x$ and~$g_y$), whose coefficients grow rapidly.

\subsection{Unequal leg lengths}

For spider algebras with unequal legs, e.g.\
$\kk[x,y]/(x^a,y^b,xy)$, the same approach applies with $u_1$ and~$u_2$
and divided-difference coordinates of orders~$1$ and~$2$.  The weight
constraints become $aw_1>(b-1)w_2$ and $bw_2>(a-1)w_1$,
which can always be satisfied for appropriate~$w_1,w_2$.

\subsection{Open questions}

The M\"obius approach produces deformations in the same embedding dimension
as the spider ($r$~variables).  It would be interesting to understand:
\begin{enumerate}[label=\textup{(\arabic*)}]
\item Whether the construction extends to non-spider monomial algebras (e.g.,
  $L$-shaped or staircase ideals in two variables).
\item The relationship between our explicit Rees families and the
  deformation-theoretic $T^1$ and $T^2$ modules of the spider algebra.
\item Whether the resulting families give useful local coordinates for
  intersection-theoretic calculations on $\Hilb^n(\C^3)$ via equivariant
  localisation.
\end{enumerate}

%% ══════════════════════════════════════════════════════════════════
%% Appendix
%% ══════════════════════════════════════════════════════════════════
\begin{appendices}

\section{Derivation and verification of the pure-power relations}
\label{app:purepower}

We describe the method for deriving $g_x=0$ and $g_y=0$
(\Cref{prop:pure}), then carry out the verification explicitly.

\subsection{Derivation method}\label{app:method}

Since $B$ is a $\kk$-basis of $R$ (\Cref{thm:basis}), any element of~$R$
has a unique $B$-expansion.  To find the expansion of $x^8$, we substitute
the rational function $x(t)=t/(1-t)$ and express $x^8$ as a linear
combination of basis elements modulo~$t^{22}$.

Concretely, we solve for coefficients $\alpha_i, \beta_j, \gamma_k \in \kk$
such that
\begin{equation}\label{eq:expansion-system}
  x^8 = \alpha_0 + \sum_{i=1}^{7}\alpha_i\,x^i + \sum_{j=1}^{7}\beta_j\,y^j
      + \sum_{k=1}^{7}\gamma_k\,z^k \quad\text{in } R.
\end{equation}
Since the constant term $1$ has $\ord_t = 0$ while $x^8$ has $\ord_t=8$,
we have $\alpha_0=0$.  Multiplying through by
$D(t)=(1-t)^7(1-2t)^7(1-3t)^7$
converts~\eqref{eq:expansion-system} into a polynomial identity of
degree~$\le 21$.  The 21 unknown coefficients
$(\alpha_1,\ldots,\alpha_7,\beta_1,\ldots,\beta_7,\gamma_1,\ldots,\gamma_7)$
are determined by matching the 22 Taylor coefficients ($[t^0]$ through
$[t^{21}]$) of the left and right sides.  (The equation for $[t^0]$ is
automatically satisfied since both sides vanish, leaving a $21\times 21$
system with a unique solution because $B$ is a basis.)

Solving this system (by exact rational arithmetic), we obtain the
expansion of $x^8$ in the basis~$B$.  To clear denominators and obtain
integer coefficients, we multiply through by the LCM of the denominators
of all $\alpha_i,\beta_j,\gamma_k$.  This gives the relation~$g_x$ in
\eqref{eq:gx}.  The same procedure applied to $y^8$ gives~$g_y$.

\subsection{Verification of $g_x=0$ and $g_y=0$}\label{app:verify-gx}

The relation $g_x=0$ holds in~$R$ \emph{by construction}: the
integer coefficients in~\eqref{eq:gx} were obtained by expanding~$x^8$
in the basis~$B$ (via the linear system of~\S\ref{app:method})
and clearing denominators.  Likewise for~$g_y$.  What remains is an
arithmetic sanity check: one verifies independently that the displayed
integer coefficients are correct.

After substituting
$x=t/(1-t)$, $y=t^2/((1-t)(1-2t))$,
$z=2t^3/((1-t)(1-2t)(1-3t))$,
the expression $g_x$ becomes a rational function of~$t$ whose
denominators are units in $\kk[[t]]$.  Expanding as a truncated
power series modulo~$t^{22}$ and forming the linear combination
prescribed by~\eqref{eq:gx}, one checks that all $22$ coefficients
$[t^0],\ldots,[t^{21}]$ vanish.  The same applies to~$g_y$
with~\eqref{eq:gy}.  This is a finite, deterministic computation
taking under one second in any computer algebra system.

\subsection{Reproducibility}\label{app:reproducibility}

We have carried out the above verification independently
in \textsc{Macaulay2}, \textsc{SageMath}, and \textsc{SymPy}, obtaining
identical results in all three systems.
A standalone \textsc{Macaulay2} script (\texttt{verify.m2}) is included as an
ancillary file with the arXiv submission; it verifies all relations and
confirms that the quotient has dimension~$22$ at several fibre values.
The following excerpt reproduces the core checks
of the mixed relations~\eqref{eq:rel-xy}--\eqref{eq:rel-yz}
and pure-power relations~\eqref{eq:gx}--\eqref{eq:gy}:

\begingroup\footnotesize
\begin{verbatim}
R = QQ[t]/ideal(t^22);
s1 = sum(22, i->t^i);  s2 = sum(22, i->(2*t)^i);
s3 = sum(22, i->(3*t)^i);
x = t*s1;  y = t^2*s1*s2;  z = 2*t^3*s1*s2*s3;
-- Mixed relations (Proposition 5.2)
assert(x*y == y - x^2);
assert(2*x*z == z - 2*y + 2*x^2);
assert(2*y*z == z - 2*y + 2*x^2 - 4*y^2);
-- Pure-power relation g_x (Proposition 5.3)
gx = 32*x^8-1728*x^7-2864*x^6-11088*x^5-14988*x^4
  -28080*x^3-23484*x^2+23484*y-2048*y^7-9728*y^6
  -18944*y^5-25888*y^4-20384*y^3-22284*y^2+2298*z
  +16*z^7-8*z^6+16*z^5-46*z^4+156*z^3-581*z^2;
assert(gx == 0);
-- Pure-power relation g_y (Proposition 5.3)
gy = 2048*y^8-112608*x^7-196272*x^6-723264*x^5
  -1000056*x^4-1835964*x^3-1556406*x^2+1556406*y
  -116736*y^7-582656*y^6-1144064*y^5-1576192*y^4
  -1226864*y^3-1395024*y^2+139779*z+1008*z^7
  -496*z^6+984*z^5-2816*z^4+9522*z^3-35392*z^2;
assert(gy == 0);
\end{verbatim}
\endgroup

\end{appendices}

%% ══════════════════════════════════════════════════════════════════
%% References
%% ══════════════════════════════════════════════════════════════════

\end{document}